\documentclass[12pt,letterpaper]{amsart}
\usepackage{amsmath,txfonts}
\usepackage{amssymb}
\usepackage{amsxtra}
\usepackage{amsthm, color}
\usepackage{txfonts}
\usepackage{graphicx}
\usepackage{times}
\usepackage{citeref}
\usepackage{tikz}
\usepackage{hyperref}
\usepackage{stmaryrd}
\usepackage[T3,T1]{fontenc}
\usepackage{pgfplots}

\usetikzlibrary{calc}

\usepackage{mathrsfs}
\usepackage{amsfonts}
\usepackage{amssymb}
\usepackage{ifthen}
\usepackage{graphicx}
\nonstopmode \numberwithin{equation}{section}

\newtheorem{thm}{Theorem}[section]
\newtheorem{lem}{Lemma}[section]
\newtheorem{cor}[thm]{Corollary}
\newtheorem{prop}[thm]{Proposition}

\newtheorem{step}{Step}[section]

\theoremstyle{definition}
\newtheorem{mlem}{Main lemma}[section]
\newtheorem{assertion}{Assertion}[section]
\newtheorem{cl}{Claim}[section]
\newtheorem{ca}{Case}[section]
\newtheorem{sca}{Subcase}[section]
\newtheorem{scl}{Subclaim}[section]
\newtheorem{conj}[thm]{Conjecture}
\newtheorem{fact}{Fact}[section]
\newtheorem{defn}[thm]{Definition}
\newtheorem{op}[thm]{Open Problem}
\newtheorem{prob}{Problem}[section]
\newtheorem{ques}{Question}[section]
\newtheorem{rem}[thm]{Remark}
\newtheorem{exam}[thm]{Example}

\numberwithin{equation}{section}

\newcounter {own}
\def\theown {\thesection       .\arabic{own}}

\newenvironment{pf}[1][]{%
 \vskip 3mm
 \noindent
 \ifthenelse{\equal{#1}{}}%
  {{\slshape Proof. }}%
  {{\slshape #1.} }%
 }%
{\qed\bigskip}

\newcounter{alphabet}
\renewcommand{\thealphabet}{\Alph{alphabet}}

\newenvironment{Thm}[1][]{\refstepcounter{alphabet}%
	\bigskip
	\noindent
	{\bf Theorem \thealphabet}%
	\ifthenelse{\equal{#1}{}}{}{ (#1)}%
	{\bf .} \itshape
}{\vskip 8pt}

\newenvironment{Lem}[1][]{\refstepcounter{alphabet}%
	\bigskip
	\noindent
	{\bf Lemma \thealphabet}%
	{\bf .} \itshape
}{\vskip 8pt}

\newcommand{\CC}{{\mathcal C}}

\def\be{\begin{equation}}
\def\ee{\end{equation}}

\newcommand{\ben}{\begin{enumerate}}
\newcommand{\een}{\end{enumerate}}

\newcommand{\blem}{\begin{lem}}
\newcommand{\elem}{\end{lem}}
\newcommand{\bthm}{\begin{thm}}
\newcommand{\ethm}{\end{thm}}
\newcommand{\bcor}{\begin{cor}}
\newcommand{\ecor}{\end{cor}}
\newcommand{\beg}{\begin{exam}}
\newcommand{\eeg}{\end{exam}}
\newcommand{\begs}{\begin{examples}}
\newcommand{\eegs}{\end{examples}}
\newcommand{\bdefe}{\begin{defn}}
\newcommand{\edefe}{\end{defn}}
\newcommand{\bprob}{\begin{prob}}
\newcommand{\eprob}{\end{prob}}
\newcommand{\bques}{\begin{ques}}
\newcommand{\eques}{\end{ques}}
\newcommand{\bei}{\begin{itemize}}
\newcommand{\eei}{\end{itemize}}
\newcommand{\bcon}{\begin{conj}}
\newcommand{\econ}{\end{conj}}
\newcommand{\bop}{\begin{op}}
\newcommand{\eop}{\end{op}}

\newcommand{\bas}{\begin{assertion}}
\newcommand{\eas}{\end{assertion}}

\newcommand{\bfa}{\begin{fact}}
\newcommand{\efa}{\end{fact}}

\newcommand{\bca}{\begin{ca}}
\newcommand{\eca}{\end{ca}}

\newcommand{\bst}{\begin{step}}
\newcommand{\est}{\end{step}}

\newcommand{\bsca}{\begin{sca}}
\newcommand{\esca}{\end{sca}}

\newcommand{\bcl}{\begin{cl}}
\newcommand{\ecl}{\end{cl}}

\newcommand{\bmlem}{\begin{mlem}}
\newcommand{\emlem}{\end{mlem}}

\newcommand{\bscl}{\begin{scl}}
\newcommand{\escl}{\end{scl}}

\newcommand{\bcons}{\begin{conjs}}
\newcommand{\econs}{\end{conjs}}

\newcommand{\bprop}{\begin{prop}}
\newcommand{\eprop}{\end{prop}}

\newcommand{\br}{\begin{rem}}
\newcommand{\er}{\end{rem}}
\newcommand{\brs}{\begin{rems}}
\newcommand{\ers}{\end{rems}}
\newcommand{\bo}{\begin{obser}}
\newcommand{\eo}{\end{obser}}
\newcommand{\bos}{\begin{obsers}}
\newcommand{\eos}{\end{obsers}}
\newcommand{\bpf}{\begin{pf}}
\newcommand{\epf}{\end{pf}}
\newcommand{\ba}{\begin{array}}
\newcommand{\ea}{\end{array}}
\newcommand{\beq}{\begin{eqnarray}}
\newcommand{\beqq}{\begin{eqnarray*}}
\newcommand{\eeq}{\end{eqnarray}}
\newcommand{\eeqq}{\end{eqnarray*}}

\newcounter{minutes}
\divide\time by 60
\newcounter{hours}
\multiply\time by 60 \addtocounter{minutes}{-\time}

\begin{document}

\def\thefootnote{}
\footnotetext{ \texttt{\tiny File:~\jobname .tex,
           printed: \number\year-\number\month-\number\day,
           \thehours.\ifnum\theminutes<10{0}\fi\theminutes}
} \makeatletter\def\thefootnote{\@arabic\c@footnote}\makeatother

\bibliographystyle{amsplain}
\title []
{The sharp Lipschitz continuity problem with respect to the pseudo hyperbolic metric}

\author{Shaolin Chen}
\address{S. L. Chen,    Center for Applied Mathematics of Guangxi, Guangxi Normal University,
Guilin, Guangxi 541004, People's Republic of China} \email{mathechen@126.com}

\author{Hidetaka Hamada}
\address{H. Hamada, Industry-Academia Co-innovation and Research Promotion Headquarters,
Kyushu Sangyo University,
3-1 Matsukadai 2-Chome, Higashi-ku, Fukuoka 813-8503, Japan.}
\email{hi.hamada01@gmail.com}

\author{Manzi Huang}
\address{M. Z.  Huang, Key Laboratory of High Performance Computing and Stochastic Information Processing,
College of Mathematics and Statistics, Hunan Normal University, Changsha, Hunan 410081, People's Republic of China}
\email{mzhuang@hunnu.edu.cn}

\author{Lei Jin}
\address{L. Jin, Key Laboratory of Computing and Stochastic Mathematics (Ministry of Education), 
School of Mathematics and Statistics, Hunan Normal University, Changsha, Hunan 410081, P. R. China}
\email{leijin0825@163.com}

\subjclass[2020]{Primary 30H30, 30C62, 31A05.}
\keywords{Harmonic Bloch type mapping, $(K,K_{0})$-Quasiregular Mapping, Lipschitz continuous, pseudo hyperbolic metric}

\begin{abstract}
 The main purpose of this paper is to address an open problem posed by Huang, Rasila, and Zhu in [Anal. Math. 48(2022), 1069-1080]. We first establish a sharp Lipschitz continuity result for locally univalent harmonic Bloch mappings with respect to the pseudo hyperbolic metric. This result is then applied to demonstrate that harmonic $(K,K_0)$-quasiregular Bloch type mappings are Lipschitz continuous under the same metric, where $K\geq1$ and $K_{0}\geq0$. Moreover, the estimates obtained are asymptotically sharp as $K \to 1^+$ and $K_0 \to 0^+$. In particular, when $K_0 = 0$, our theorem provides a solution to the aforementioned open problem in the asymptotically sharp sense.
\end{abstract}

\maketitle \pagestyle{myheadings} \markboth{S. L. Chen, H. Hamada, M. Z. Huang and L. Jin}{The sharp Lipschitz continuity problem with respect to the pseudo hyperbolic metric}

\dedicatory{}


\section{Introduction and main results}\label{sec-1}
Let $\mathbb{D} = \{ z : |z| < 1 \}$ denote the open unit disk in the complex plane $\mathbb{C}$, and let $\mathbb{T} := \partial \mathbb{D}$ be its boundary, the unit circle.  
For a subdomain $\Omega$ of $\mathbb{C}$, we denote by $\CC^m(\Omega)$ the class of all complex-valued functions on $\Omega$ that possess continuous derivatives up to order $m$, 
where $m \in  \{0,1,2,\ldots\}$. In particular, we set $\CC(\Omega) = \CC^0(\Omega)$.
For $z=x+iy\in\mathbb{C}$, the complex formal derivatives operators are defined by
$$\frac{\partial}{\partial z}=\frac{1}{2}\left(\frac{\partial }{\partial x}-i\frac{\partial }{\partial y}\right)
~~~\mbox{and}~~~\frac{\partial}{\partial \overline{z}}=\frac{1}{2}\left(\frac{\partial f}{\partial x}+i\frac{\partial f}{\partial y}\right).$$

For $z = r e^{i\vartheta} \in \mathbb{C}$ and $\vartheta \in [0, 2\pi]$, the directional derivative of $f$ at $z$ in the direction $\vartheta$ is defined as
$$
\partial_\vartheta f(z)
=
\lim_{r \to 0^+} \frac{f(z + r e^{i\vartheta}) - f(z)}{r}
=
e^{i\vartheta} f_z(z) + e^{-i\vartheta} f_{\bar z}(z),
$$ where $f_z=\partial f/\partial z$ and $f_{\overline{z}}=\partial f/\partial \overline{z}$.
Consequently, the maximum and minimum moduli of the directional derivative are given respectively by
$$
\Lambda_f(z) := \max_{0 \leq \vartheta \leq 2\pi} |\partial_\vartheta f(z)|
= |f_z(z)| + |f_{\bar z}(z)|,
$$
and
$$
\lambda_f(z) := \min_{0 \leq \vartheta\leq 2\pi} |\partial_\vartheta f(z)|
= \bigl| |f_z(z)| - |f_{\bar z}(z)| \bigr|.
$$


A complex-valued function $f = u + iv \in \mathcal{C}^2(\Omega) $ is called a \emph{harmonic mapping} in the domain \( \Omega \subset \mathbb{C} \) if both its real and imaginary parts are harmonic, i.e.,
$$
\Delta u = \Delta v = 0 \quad \text{in } \Omega,
$$ where $\Delta$ represents the Laplacian operator
 $$\Delta:=4\frac{\partial^{2}}{\partial z\partial \overline{z}}=
 \frac{\partial^{2}}{\partial x^{2}}+\frac{\partial^{2}}{\partial y^{2}}$$
 and $z=x+iy\in\Omega$ (see \cite{Duren2004}).
 
It is a classical result that a complex-valued harmonic function (or  harmonic mapping) \( f \) is locally univalent and sense-preserving in \( \Omega \) if and only if its Jacobian determinant
$$
J_f:= |f_z|^2 - |f_{\bar z}|^2
$$
is strictly positive throughout \( \Omega \). Equivalently, the  second complex dilatation dilatation $\omega_f$ of $ f $ satisfies
$$
|\omega_{f}(z)|
=
\left| \frac{f_{\bar z}(z)}{f_z(z)} \right|
< 1,
\qquad z \in \Omega.
$$
Moreover, if $\Omega$ is a simply connected domain,
then $f$ has the canonical decomposition
$f=h+\overline{g}$, where $h$ and $g$ are analytic in $\Omega$. 

Denote by $\mathcal{A}(\Omega)$ the set of all analytic functions from $\Omega$ into $\mathbb{C}$, 
and by $\mathcal{H}(\Omega)$ the set of all complex-valued harmonic functions from $\Omega$ into $\mathbb{C}$.

A mapping $f:\Omega\to\mathbb{C}$ is said to be \emph{absolutely continuous on lines} (abbreviated as $ACL$) in the domain $\Omega$ if, for every closed rectangle $R\subset\Omega$ whose sides are parallel to the coordinate axes, $f$ is absolutely continuous on almost every horizontal line and almost every vertical line in $R$. Such a mapping possesses partial derivatives $f_x$ and $f_y$ almost everywhere in $\Omega$. Moreover, we say that $f\in ACL^2$ if $f\in ACL$ and its partial derivatives are locally $L^2$-integrable in $\Omega$.

A mapping $f:\Omega\to\mathbb{C}$ is called a \emph{$(K,K_0)$-quasiregular mapping} if it satisfies the following conditions:
\begin{enumerate}
    \item[(1)] $f\in ACL^2$ in $\Omega$ and $J_f>0$ almost everywhere in $\Omega$, where $J_f$ denotes the Jacobian determinant of $f$;
    \item[(2)] there exist constants $K\ge 1$ and $K_0\ge 0$ such that
    \beqq
        \Lambda_f^2 \le K J_f + K_0 \quad \text{a.e. in } \Omega.
    \eeqq
\end{enumerate}

In the special case where $K_0=0$, a $(K,K_0)$-quasiregular mapping is said to be \emph{$K$-quasiregular}. Furthermore, we say that $f$ is \emph{$K$-quasiconformal} in $\Omega$ if $f$ is $K$-quasiregular and homeomorphic in $\Omega$. 
In particular, if a $(K, K_0)$-quasiregular mapping (respectively, $K$-quasiregular mapping) $f$ is harmonic, then it is called a harmonic $(K, K_0)$-quasiregular mapping (respectively, harmonic $K$-quasiregular mapping) (see \cite{FinnSerrin1958,KM,K-15,KL, Nirenberg1953}).

By Lewy's Theorem (\cite[p.20]{Duren2004}), we immediately obtain the following result.

\begin{prop}
Let $f$ be a harmonic $(K,K_0)$-quasiregular mapping in a domain $\Omega \subset \mathbb{C}$, where
$K\ge 1$ and $K_0\ge 0$ are constants. Then $f$ is sense-preserving in $\Omega$ if and only if $f$ is locally univalent in $\Omega$.
\end{prop}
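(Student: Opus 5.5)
The plan is to prove the two implications separately. The forward direction rests on Lewy's theorem; the reverse direction uses the structural condition (1) in the definition of a $(K,K_0)$-quasiregular mapping. Throughout, $f$ is harmonic, so $f\in\CC^\infty(\Omega)$ and $J_f=|f_z|^2-|f_{\bar z}|^2$ is continuous. By sense-preserving we mean $J_f>0$ everywhere in $\Omega$.

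($\Leftarrow$) Suppose $f$ is locally univalent in $\Omega$. Lewy's theorem (\cite[p.20]{Duren2004}) says that a locally univalent harmonic mapping has nonvanishing Jacobian, so $J_f(z)\neq 0$ for every $z\in\Omega$. On the domain $\Omega$, which is connected, $J_f$ is continuous and never zero, so it has constant sign: either $J_f>0$ on all of $\Omega$ or $J_f<0$ on all of $\Omega$. Condition (1) of the definition requires $J_f>0$ almost everywhere. Hence $J_f<0$ everywhere is impossible, since it would fail on a set of positive measure. Therefore $J_f>0$ throughout $\Omega$, i.e.\ $f$ is sense-preserving.

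($\Rightarrow$) Suppose $f$ is sense-preserving, so $J_f(z)>0$ at every point. Fix $z_0\in\Omega$. The Jacobian of the $\CC^1$ map $f$, viewed as a map of $\mathbb{R}^2$, is nonzero at $z_0$. By the inverse function theorem, $f$ is therefore injective on a neighborhood of $z_0$. Since $z_0$ was arbitrary, $f$ is locally univalent.

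The only genuinely nontrivial ingredient is Lewy's theorem, which rules out interior points with $J_f=0$ for a locally univalent harmonic map. We invoke it directly rather than reprove it. Beyond that, the step needing care is the sign argument: the hypothesis ``$J_f>0$ a.e.'' from the quasiregularity condition (1) is exactly what excludes the orientation-reversing alternative. Note that condition (2), $\Lambda_f^2\le KJ_f+K_0$, plays no role in the argument.
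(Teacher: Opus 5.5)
Your proof is correct and is exactly the argument compressed into the paper's remark that the proposition follows immediately from Lewy's theorem. Lewy gives $J_f\neq 0$. Connectedness of $\Omega$, together with the a.e.\ positivity in condition (1), then forces $J_f>0$. The other direction is the inverse function theorem under your reading of sense-preserving as $J_f>0$ everywhere; that reading is genuinely needed, since under the weaker notion allowing isolated zeros of $J_f$ the map $f(z)=z^2$ would be a counterexample.

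The only slip is cosmetic. Your opening paragraph attributes Lewy's theorem to the forward direction, whereas your actual argument uses it, together with condition (1), in the ($\Leftarrow$) direction.
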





Fix $w \in \mathbb{D}$. Let $\varphi_w$ be the M\"obius transformation of $\mathbb{D}$ defined by
$$
\varphi_w(z) = \frac{w - z}{1 - \overline{w} z}, \qquad z \in \mathbb{D}.
$$
The \emph{automorphism group} of the unit disk $\mathbb{D}$, denoted by $\operatorname{Aut}(\mathbb{D})$, 
is the set of all univalent analytic functions that map  $\mathbb{D}$ onto itself, 
endowed with the operation of function composition.
The pseudo hyperbolic distance on $\mathbb{D}$ is defined by
$$
\rho(z, w) = |\varphi_w(z)|.
$$
This distance is $\operatorname{Aut}(\mathbb{D})$ invariant. More precisely, for any $\phi \in \operatorname{Aut}(\mathbb{D})$,
$$
\rho(\phi(z), \phi(w)) = \rho(z, w).
$$


A complex-valued function $f \in \mathcal{C}^2(\mathbb{D})$ is said to be of \textit{Bloch type} if
$$
\|f\|_{\mathscr{B}_{s}} := \sup_{z \in \mathbb{D}} \mathcal{B}_{f}(z) < \infty,
$$ where $\mathcal{B}_{f}(z)=(1 - |z|^2) \Lambda_f(z).$
The above quantity defines a seminorm on the space of such functions. Endowed with the norm
\[
\|f\|_{\mathscr{B}} := |f(0)| + \sup_{z \in \mathbb{D}} \mathcal{B}_{f}(z),
\]
the resulting normed space, denoted by $\mathscr{B}_{t}$, is called the \textit{Bloch type space}. In particular, we denote by
\[
\mathscr{B}:=\mathscr{B}_{t}\cap\mathcal{A}(\mathbb{D})
\quad\text{and}\quad
\mathscr{B}_{h}:=\mathscr{B}_{t}\cap\mathcal{H}(\mathbb{D})
\]
the analytic Bloch space and the harmonic Bloch space, respectively.
Moreover, we refer to $f\in\mathscr{B}$ and $f\in\mathscr{B}_{h}$ an analytic Bloch function  and a harmonic Bloch mapping, respectively.
For the analytic Bloch space, we refer the reader to \cite{AndersonCluniePommerenke1974} for further details.
Later, Colonna \cite{Colonna1989} showed that if $f \in \mathcal{H}(\mathbb{D})$ satisfies
\be\label{eq-cj-1}
\sup_{\substack{z,w \in \mathbb{D}, ~ z \neq w}} \frac{|f(z) - f(w)|}{\sigma(z,w)} < \infty,
\ee
then $f \in \mathscr{B}_{h}$, where
$$
\sigma(z,w) = \frac{1}{2} \log \left( \frac{1 + \rho(z,w)}{1 - \rho(z,w)} \right) = \operatorname{arctanh}(\rho(z,w))
$$
is the hyperbolic distance between $z$ and $w$ in $\mathbb{D}$.
The converse direction follows immediately, hence  
$f \in \mathcal{H}(\mathbb{D})$ satisfies (\ref{eq-cj-1}) if and only if  
$f \in \mathscr{B}_{h}$ (see \cite{ChenHamadaZhu2022,Colonna1989}).

For $f\in\mathscr{B}$, Ghatage, Yan and Zheng \cite{GhatageYanZheng2000} proved that
$\mathcal{B}_{f}$ is Lipschitz continuous with respect to the pseudo hyperbolic
metric, which is given as follows.

\begin{Thm}{\rm (\cite[Theorem  1]{GhatageYanZheng2000})}\label{Thm-A}
Let $f\in\mathscr{B}$. Then, for all $z_{1}, z_{2}\in\mathbb{D}$,
$$\left|\mathcal{B}_{f}(z_{1})-\mathcal{B}_{f}(z_{2})\right|\leq3.31\|f\|_{\mathscr{B}_{s}}~\rho(z_{1},z_{2}).$$
\end{Thm}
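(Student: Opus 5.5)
We need to prove Theorem A: for analytic Bloch $f$, $|\mathcal{B}_f(z_1)-\mathcal{B}_f(z_2)|\le 3.31\|f\|_{\mathscr{B}_s}\rho(z_1,z_2)$. Here $\mathcal{B}_f(z)=(1-|z|^2)|f'(z)|$.

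The first step is a reduction by Möbius invariance. For $\phi=\varphi_{z_2}$ set $g=f\circ\phi$. Since $(1-|\phi(z)|^2)=(1-|z|^2)|\phi'(z)|$, we get $\mathcal{B}_g(z)=\mathcal{B}_f(\phi(z))$ and $\|g\|_{\mathscr{B}_s}=\|f\|_{\mathscr{B}_s}$. As $\varphi_{z_2}$ is an involution with $\varphi_{z_2}(0)=z_2$, and $\rho$ is $\operatorname{Aut}(\mathbb{D})$-invariant, it suffices to prove
\[
|\mathcal{B}_g(z)-\mathcal{B}_g(0)|\le C\|g\|_{\mathscr{B}_s}|z|,
\]
where $z=\varphi_{z_2}(z_1)$. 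We normalize $\|g\|_{\mathscr{B}_s}=1$ and write $r=|z|$.

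Next we split according to $r$. For large $r$, say $r\ge r_0$, the trivial bound $|\mathcal{B}_g(z)-\mathcal{B}_g(0)|\le 1\le r/r_0$ gives the constant $1/r_0$. For small $r$, we estimate the difference directly:
\[
|\mathcal{B}_g(z)-\mathcal{B}_g(0)|\le (1-r^2)\,|g'(z)-g'(0)|+r^2|g'(0)|.
\]
To control $g'(z)-g'(0)$, we integrate $g''$ along the radial segment. Cauchy's estimate on the disk $D(w,(1-|w|)/2)$, combined with $|g'|\le 1/(1-|\zeta|^2)$, gives a bound of the form $|g''(w)|\le c/(1-|w|)^2$. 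A sharper bound is available: $(1-|w|^2)^2|g''(w)|$ is at most a small absolute constant, roughly $3\sqrt3/2\cdot 2$ depending on normalization. To obtain it, apply the invariance to $g\circ\varphi_w$ and use the classical estimate for the second Taylor coefficient of a Bloch function with $\|\cdot\|_{\mathscr{B}_s}\le1$, namely $|h''(0)|\le 3\sqrt3/2$ (Landau-type extremal). With $|g'(0)|\le1$ this yields
\[
|\mathcal{B}_g(z)-\mathcal{B}_g(0)|\le (1-r^2)\int_0^r \frac{K}{(1-t^2)^2}\,dt+r^2,
\]
which is $r\,\Phi(r)$ with $\Phi$ increasing and explicit.

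Finally, the constant is $\max\{\sup_{r<r_0}\Phi(r),\,1/r_0\}$. We choose $r_0$ to balance the two terms, and a numerical optimization should give the value $3.31$.

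The main obstacle is getting the numerical constant as small as $3.31$. Crude Cauchy estimates will give something larger. The sharp second-derivative bound via the extremal problem $\sup|h''(0)|$ is needed. One may also have to replace the crude term $r^2|g'(0)|$ by a joint estimate, for instance by using $|(1-|z|^2)g'(z)|\le1$ together with the triangle inequality in the modulus $\bigl||a|-|b|\bigr|\le|a-b|$ more carefully. I would first attempt the straightforward version, then refine the $g''$ bound if the constant exceeds $3.31$.
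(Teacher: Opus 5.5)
The paper does not prove Theorem A; it only quotes it from Ghatage--Yan--Zheng, and Xiong's sharp constant $\frac{3\sqrt3}{2}$ supersedes it. Your argument therefore has to stand on its own.

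Your skeleton is correct: the M\"obius reduction, the trivial bound $|\mathcal{B}_g(z)-\mathcal{B}_g(0)|\le1\le r/r_0$ for $r\ge r_0$, and the splitting $(1-r^2)|g'(z)-g'(0)|+r^2|g'(0)|$ for $r<r_0$. The gap is quantitative, and it sits in the step you left vague. With a uniform bound $(1-|w|^2)^2|g''(w)|\le K$, your own expression gives $\Phi(r)\to K$ as $r\to0^+$, so the final constant is at least $K$. The argument you describe does not produce a uniform $K$ below $3.31$. For $h=g\circ\varphi_w$, the chain rule with $\varphi_w'(0)=-(1-|w|^2)$ and $\varphi_w''(0)=-2\overline{w}(1-|w|^2)$ gives
\[
(1-|w|^2)^2g''(w)=h''(0)-2\overline{w}\,h'(0).
\]
Hence you only get $(1-|w|^2)^2|g''(w)|\le\frac{3\sqrt3}{2}+2|w|$, and the supremum of this bound is $\frac{3\sqrt3}{2}+2\approx4.60$; your guess $3\sqrt3\approx5.20$ is worse. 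Even freezing $K=\frac{3\sqrt3}{2}+2r_0$ on the segment $|w|\le r_0$ and optimizing $r_0$ only yields about $3.39$. So the expectation that ``a numerical optimization should give $3.31$'' is not met by the argument as written. Pinning down the true supremum of $(1-|w|^2)^2|g''(w)|$ would be a separate and harder extremal problem.

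That refinement is not needed: you only have to keep the $t$-dependence inside the radial integral. Use
\[
\int_0^r\frac{dt}{(1-t^2)^2}=\frac12\Bigl(\frac{r}{1-r^2}+\operatorname{arctanh}r\Bigr),\qquad \int_0^r\frac{2t\,dt}{(1-t^2)^2}=\frac{r^2}{1-r^2},
\]
together with $(1-r^2)\operatorname{arctanh}r\le r$ and $|g'(0)|\le1$. This gives
\[
|\mathcal{B}_g(z)-\mathcal{B}_g(0)|\le\frac{3\sqrt3}{4}\bigl(r+(1-r^2)\operatorname{arctanh}r\bigr)+2r^2\le\Bigl(\frac{3\sqrt3}{2}+2r\Bigr)r .
\]
Take $r_0=1/3.31$. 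For $r<r_0$ the bound is at most $\bigl(\frac{3\sqrt3}{2}+\frac{2}{3.31}\bigr)r\approx3.20\,r$. For $r\ge r_0$ the trivial bound gives $3.31\,r$. Balancing $\frac{3\sqrt3}{2}+2r_0=1/r_0$ improves this to about $3.22$. The only extremal input is $|h''(0)|\le\frac{3\sqrt3}{2}$ for $\|h\|_{\mathscr{B}_s}\le1$. This follows from the Cauchy estimate on $|z|=1/\sqrt3$, with equality for $h(z)=\frac{3\sqrt3}{4}z^2$.
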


Later,  Hosokawa and Ohno \cite{HO} employed a different approach and proved that, for all $z_{1}, z_{2}\in\mathbb{D}$,
\be\label{eq-cj-2}
\left|\mathcal{B}_{f}(z_{1})-\mathcal{B}_{f}(z_{2})\right|\leq 20\,\|f\|_{\mathscr{B}_{s}}\,\rho(z_{1},z_{2}),
\ee
where $f\in\mathscr{B}$. They used \eqref{eq-cj-2} to study composition operators on Bloch spaces (see \cite{HO, HO-1}).
In \cite{Xiong2003}, Xiong proved the 
 sharp form of Theorem \ref{Thm-A} (or \eqref{eq-cj-2}) as follows.

\be\label{eq-cj-3}
\left|\mathcal{B}_{f}(z_{1})-\mathcal{B}_{f}(z_{2})\right|\leq \frac{3\sqrt{3}}{2}\,\|f\|_{\mathscr{B}_{s}}\,\rho(z_{1},z_{2}).
\ee

Recently, an analog of Theorem \ref{Thm-A} was established by Huang, Rasila, and Zhu \cite{HuangRasilaZhu2022}, 
who investigated the Lipschitz continuity of harmonic Bloch mapping with respect to the pseudo hyperbolic metric.
It  reads as follows. 
For $f\in\mathscr{B}_{h}$, we have
\be\label{eq-cj-4}\left|\mathcal{B}_{f}(z_{1})-\mathcal{B}_{f}(z_{2})\right|\leq3\sqrt{3}\|f\|_{\mathscr{B}_{s}}~\rho(z_{1},z_{2})\ee
for all $z_{1}, z_{2}\in\mathbb{D}$.

In order to generalize the analytic Bloch spaces, Efraimidis, Gaona, Hern\'andez and Venegas \cite{EfraimidisGaonaHernandezVenegas2017} 
introduced the class of harmonic Bloch type mappings as follows. A complex-valued harmonic function $ f $ defined in $\mathbb{D}$ is said to be a harmonic Bloch type mapping if  
$$
\|f\|_{\mathscr{B}_{h^*,s}} = \sup_{z \in \mathbb{D}} \mathcal{B}_{f}^{\ast}(z)< \infty,
$$  where $\mathcal{B}_{f}^{\ast}(z)=(1 - |z|^2) \sqrt{|J_f(z)|}$.
We denote this class by \(\mathscr{B}_h^*\). The quantity  
$$
\|f\|_{\mathscr{B}_h^*} = |f(0)| + \|f\|_{\mathscr{B}_{h^*,s}}
$$ 
is referred to as the Bloch type pseudo-norm of $ f $. It is not immediately obvious that \(\mathscr{B}_h \subseteq \mathscr{B}_h^*\).
Huang, Rasila, and Zhu \cite{HuangRasilaZhu2022} established that harmonic $K$-quasiregular Bloch 
type mappings are Lipschitz continuous with respect to the pseudo hyperbolic metric. Their result is stated as follows.

\begin{Thm}\rm (\cite[Theorem  1.5]{HuangRasilaZhu2022})\label{Thm-Cl}
Let $f\in\mathscr{B}_h^*$ be a locally univalent harmonic $K$-quasiregular mapping in $\mathbb D$. Then, for all $z_{1},z_{2}\in\mathbb D$,
\be\label{eq-CJ-5}
\left|
\mathcal{B}_{f}^{\ast}(z_{1})
-
\mathcal{B}_{f}^{\ast}(z_{2})
\right|
\le
2.8587(K+1)\rho(z_{1},z_{2})\|f\|_{ \mathscr{B}_{h^*,s}}.
\ee

\end{Thm}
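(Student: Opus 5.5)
We would prove Theorem \ref{Thm-Cl} by reducing to the case $z_2=0$ via Möbius invariance, and then estimating the growth of $\mathcal{B}_f^\ast$ along a radius. Write $f=h+\overline{g}$ with $h,g$ analytic in $\mathbb{D}$. Since $f$ is locally univalent and sense-preserving, $|h'|>|g'|$ and $J_f=|h'|^2(1-|\omega|^2)$ with $\omega=g'/h'$ analytic and $|\omega|<1$. The $K$-quasiregularity $(|h'|+|g'|)^2\le K(|h'|^2-|g'|^2)$ gives $|\omega|\le k:=(K-1)/(K+1)$. For fixed $a\in\mathbb{D}$, set $F=f\circ\varphi_a$. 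Then $F$ is again a locally univalent harmonic $K$-quasiregular mapping, and $\mathcal{B}_F^\ast(z)=\mathcal{B}_f^\ast(\varphi_a(z))$, because $(1-|\varphi_a(z)|^2)=(1-|z|^2)|\varphi_a'(z)|$ and $J_F=J_f\circ\varphi_a\,|\varphi_a'|^2$. Hence $\|F\|_{\mathscr{B}_{h^*,s}}=\|f\|_{\mathscr{B}_{h^*,s}}$, and it suffices to show
$$|\mathcal{B}_F^\ast(z)-\mathcal{B}_F^\ast(0)|\le C(K)\,|z|\,\|F\|_{\mathscr{B}_{h^*,s}}$$
with $C(K)=2.8587(K+1)$. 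Taking $a=z_2$ then gives the claim.

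Next we estimate the derivative of $\mathcal{B}^\ast$. On a region where it is smooth (it is positive everywhere), $\log\mathcal{B}_F^\ast=\log(1-|z|^2)+\log|h'|+\tfrac12\log(1-|\omega|^2)$. Its gradient has modulus at most
$$\frac{2|z|}{1-|z|^2}+\Bigl|\frac{h''}{h'}\Bigr|+\frac{|\omega||\omega'|}{1-|\omega|^2}.$$
We then bound each term by $M/\mathcal{B}_F^\ast$ times something, where $M=\|F\|_{\mathscr{B}_{h^*,s}}$. Integrating along the segment $[0,z]$ gives an estimate of the form $|\nabla \mathcal{B}_F^\ast(\zeta)|\le c\,M/(1-|\zeta|^2)$, and so
$$|\mathcal{B}_F^\ast(z)-\mathcal{B}_F^\ast(0)|\le cM\,\sigma(0,z).$$
That is a hyperbolic Lipschitz bound, not a pseudo-hyperbolic one. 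To get the factor $\rho=|z|$ we instead fix $z$, apply the automorphism that moves the midpoint appropriately, and use a sharper pointwise derivative bound at the origin only. Because of the invariance, $|\nabla\mathcal{B}^\ast(0)|\le cM$ transfers to $(1-|\zeta|^2)|\nabla\mathcal{B}^\ast(\zeta)|\le cM$. Combining this with the trivial bound $|\mathcal{B}^\ast(z)-\mathcal{B}^\ast(0)|\le M$ for $|z|$ near $1$ yields a Lipschitz bound in $\rho$, after optimizing the split radius. The constant $2.8587$ should come from this one-variable optimization.

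The key step, and the main obstacle, is the pointwise bound at $0$. We need a bound on $|h''(0)/h'(0)|$ in terms of $M/\mathcal{B}^\ast(0)$; here $\mathcal{B}^\ast(0)=|h'(0)|\sqrt{1-|\omega(0)|^2}$.

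- We use the quasiregularity to obtain $|h'(\zeta)|\le M/\bigl((1-|\zeta|^2)\sqrt{1-k^2}\bigr)$, and hence $|h'(\zeta)|(1-|\zeta|^2)\le M\sqrt{(K+1)^2/(4K)}$.
- A Schwarz–Pick/Bloch-type estimate for the analytic Bloch function $h$ with normalized value $|h'(0)|$ then bounds $|h''(0)|$ by roughly $(\text{Bloch norm})-|h'(0)|^2/(\text{Bloch norm})$, in the style of Xiong's sharp argument for \eqref{eq-cj-3}.
- The Schwarz–Pick lemma for $\omega/k$ controls $|\omega'(0)|\le (k^2-|\omega(0)|^2)/k$.

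Tracking these yields a bound linear in $K+1$. We expect the delicate part to be combining the $h''$ and $\omega'$ contributions without losing the linear dependence on $K+1$. If necessary we would simply bound each crudely by a multiple of $(K+1)M$, since the target constant is not claimed to be sharp.
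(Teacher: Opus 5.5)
\paragraph{Verdict.} Your strategy works and is genuinely different from the paper's, but your second bullet is false and must be replaced by a cruder estimate.

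\paragraph{The false claim.} No bound of the form $|h''(0)|\le C\bigl(B-|h'(0)|^2/B\bigr)$ can hold. Take the functions $F_\alpha$ of Lemma~\ref{CL-lem-1}: they have Bloch seminorm $1$ and $F_\alpha'(0)=\alpha=(1+m)e^{-m}$. One computes $F_\alpha''(0)=-2m\alpha$, while $1-\alpha^2=m^2+O(m^3)$ as $m\to0^+$. So $|h''(0)|$ can be of order $\sqrt{B-|h'(0)|}$, which is much larger than $B-|h'(0)|^2/B$.

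\paragraph{The repair.} You do not need that estimate.
\begin{itemize}
\item Cauchy's estimate on $|\zeta|=1/\sqrt3$, together with your first bullet, gives
$|h''(0)|\le\frac{3\sqrt3}{2}\|h\|_{\mathscr{B}_s}\le\frac{3\sqrt3}{2}\cdot\frac{K+1}{2\sqrt K}\,M$.
\item Since $|h'(0)|\sqrt{1-|\omega(0)|^2}=\mathcal{B}_F^\ast(0)\le M$, Schwarz--Pick for $\omega$ bounds the other contribution $|h'(0)|\,|\omega(0)||\omega'(0)|/\sqrt{1-|\omega(0)|^2}$ by $|\omega(0)|M\le kM$.
\item Because $\nabla(1-|z|^2)$ vanishes at $0$, these give $|\nabla\mathcal{B}_F^\ast(0)|\le cM$ with $c=\frac{3\sqrt3(K+1)}{4\sqrt K}+k$.
\item Your M\"obius transfer and radial integration then give $|\mathcal{B}_F^\ast(z)-\mathcal{B}_F^\ast(0)|\le M\min\{c\operatorname{arctanh}|z|,1\}$. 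The argument of \eqref{ggp-1} bounds this by $M\operatorname{coth}(1/c)\,|z|$.
\item Finally, $\operatorname{coth}(1/c)\le c+\frac{1}{3c}<1.87(K+1)$, which proves the statement.
\end{itemize}
So there is no delicate interplay between the $h''$ and $\omega'$ terms. Also, $2.8587$ is not the output of your optimization; you only need to land below it, and you do so with room to spare. The ``midpoint'' automorphism is unnecessary.

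\paragraph{Comparison with the paper.} The paper never proves Theorem~\ref{Thm-Cl} directly. It obtains it from Theorem~\ref{cor-1} and the Remark that follows, which gives $C(K)\le 1.4824(1+K)$. After the same M\"obius reduction, it writes $U=a-bs$, where $a=(1-|\xi|^2)|H'|$, $b=(1-|\xi|^2)|G'|$ and $s=|\omega|/(1+\sqrt{1-|\omega|^2})$. It then:
\begin{itemize}
\item applies the sharp Theorem~\ref{CL-lem-2}, built on the Bonk--Minda--Yanagihara bound (Lemma~\ref{Im-1}), to $a$;
\item applies Xiong's estimate \eqref{eq-cj-3} to $b$;
\item uses the $\operatorname{arctanh}$-versus-trivial-bound splitting only for the bounded factor $s$;
\item controls $\|h\|_{\mathscr{B}_s}$ and $\|g\|_{\mathscr{B}_s}$ by Lemma~\ref{lem-2.5}.
\end{itemize}

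\paragraph{What each route buys.} Yours is more elementary: only Cauchy's estimate, Schwarz--Pick and M\"obius invariance. Asymptotically in $K$ it even gives the smaller constant, about $1.3\sqrt K$ against about $2.66\sqrt K$ for $C(K)$. But because you apply the splitting to $\mathcal{B}^\ast$ as a whole, at $K=1$ it yields only $\operatorname{coth}\bigl(2/(3\sqrt3)\bigr)\approx 2.73$. The paper's decomposition keeps the leading term under the sharp constant $C_0\approx1.68$ with no splitting loss. That is exactly what gives $C(K)\to C_0$, i.e.\ asymptotic sharpness as $K\to1^+$.
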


Since inequalities (\ref{eq-cj-4}) and (\ref{eq-CJ-5}) are not sharp, Huang, Rasila, and Zhu posed the following open problem.

\begin{op}{\rm (\cite[Question 1.6]{HuangRasilaZhu2022})}\label{Open-1}
What are the optimal constants in inequalities (\ref{eq-cj-4}) and (\ref{eq-CJ-5})?
\end{op}

The sharp constant of inequality \eqref{eq-cj-4} was obtained by Chen, Hamada and Zhu (see \cite[Theorem 2.2]{ChenHamadaZhu2022})
as follows.
\begin{align}\label{eq1.6}
\left|\mathcal{B}_{f}(z_{1})-\mathcal{B}_{f}(z_{2})\right|\leq\frac{3\sqrt{3}}{2}\|f\|_{\mathscr{B}_{s}}~\rho(z_{1},z_{2})
\end{align}
for all $z_{1}, z_{2}\in\mathbb{D}$.
However, the sharp constant of inequality \eqref{eq-CJ-5} remains open.

Also, in view of \eqref{eq1.6} and Theorem B,
the following question would be interesting.

\begin{ques}\label{ques-1}
Can we find the sharp inequality similar to \eqref{eq1.6}
in the case $f\in\mathscr{B}_h$ is locally univalent?
\end{ques}

The primary aim of this paper is to investigate Open Problem \ref{Open-1} and Question \ref{ques-1}. 
We first establish a sharp Lipschitz continuity result for locally univalent harmonic Bloch mappings with respect to the pseudo hyperbolic metric,
which gives a positive answer to Question \ref{ques-1}. 
The result is stated as follows.

\begin{thm}\label{CL-lem-2}
Let $f\in\mathscr{B}_h$ be locally univalent. Then, for all $z_{1},~z_{2}\in\mathbb{D}$,
\be\label{eq-cl-10}\left|\mathcal{B}_{f}(z_{1})-\mathcal{B}_{f}(z_{2})\right|\leq\|f\|_{\mathscr{B}_{s}}C_{0}~\rho(z_{1},z_{2}),\ee
where $C_{0}=(4+2\sqrt{5})e^{-\frac{(1+\sqrt{5})}{2}}.$
  The constant $C_0$ in (\ref{eq-cl-10}) is sharp.
\end{thm}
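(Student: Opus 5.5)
By Möbius invariance we may normalize. For $\phi\in\operatorname{Aut}(\mathbb{D})$, the map $f\circ\phi$ is again a locally univalent harmonic mapping. Since $(1-|z|^2)|\phi'(z)|=1-|\phi(z)|^2$, we get $\mathcal{B}_{f\circ\phi}(z)=\mathcal{B}_f(\phi(z))$, and hence $\|f\circ\phi\|_{\mathscr{B}_s}=\|f\|_{\mathscr{B}_s}$. Composing with $\varphi_{z_2}$, it therefore suffices to prove
$$|\mathcal{B}_f(z)-\mathcal{B}_f(0)|\le C_0\,|z|$$
for all $z\in\mathbb{D}$ under the normalization $\|f\|_{\mathscr{B}_s}=1$. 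A further rotation lets us take $z=r\in[0,1)$. By Lewy's theorem and the Proposition, $f$ is either sense-preserving or sense-reversing; replacing $f$ by $\overline f$ if necessary, we assume $f=h+\overline g$ is sense-preserving. Then $h'\neq0$, $\omega=g'/h'$ is analytic with $|\omega|<1$, and
$$\mathcal{B}_f=(1-|z|^2)|h'|(1+|\omega|).$$

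Next we encode the Bloch bound as a Harnack-type constraint. Write $h'=e^{\Phi}$ with $\Phi$ analytic. The condition $\mathcal{B}_f\le1$ gives
$$\operatorname{Re}\Phi(z)+\log(1+|\omega(z)|)\le -\log(1-|z|^2).$$
Since $\log(1+|\omega|)$ is subharmonic, the function
$$a(z):=-\log(1-|z|^2)-\operatorname{Re}\Phi(z)-\log(1+|\omega(z)|)\ge0$$
satisfies $\mathcal{B}_f=e^{-a}$. Moreover, $\Delta a\le\Delta(-\log(1-|z|^2))=4/(1-|z|^2)^2$.

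The key step is a one-variable comparison along the segment $[0,r]$. We would use the Harnack/Schwarz–Pick inequality for the nonnegative superharmonic part. Since $-\log(1-|z|^2)$ is explicit, this bounds $|\nabla a|$, or directly $a(r)$, in terms of $a(0)=t$ and $r$. Doing the same for the opposite inequality (a lower bound on $\mathcal{B}_f(r)$ relative to $\mathcal{B}_f(0)$) reduces both $\mathcal{B}_f(r)-\mathcal{B}_f(0)$ and $\mathcal{B}_f(0)-\mathcal{B}_f(r)$ to explicit functions of the form $e^{-t}-e^{-\psi(t,r)}$. The remaining task is to maximize the ratio of these differences to $r$ over $t\ge0$ and $r\in(0,1)$.

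We expect this optimization to be the main obstacle. Note that $C_0=2\tau^3e^{-\tau}$ with $\tau=(1+\sqrt5)/2$ the golden ratio. This suggests the supremum is attained as $r\to0^+$, where the ratio becomes a derivative bound $(1-|z|^2)|\nabla\mathcal{B}_f|\le C_0$, at an interior critical point of a function like $t\mapsto q(t)e^{-t}$ whose critical equation reduces to $\tau^2=\tau+1$. We would first prove this infinitesimal inequality and then integrate it along the hyperbolic geodesic. Passing from $\sigma$ to $\rho$ would require an extra argument, for example a convexity or monotonicity check showing that the quotient difference$/r$ is maximized as $r\to0$.

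For sharpness, we would take an analytic (so $\omega\equiv0$) locally univalent extremal realizing equality in the Harnack step. A natural candidate is $h'(z)=\exp\!\big(c\,\tfrac{1+z}{1-z}\big)$ times a normalizing factor, or $h'(z)=e^{cz}$, with the parameter chosen so that $\mathcal{B}_f\le1$ holds with equality at the point where $a=\tau$. Letting $z_1\to z_2$ at that point should then show the ratio tends to $C_0$.
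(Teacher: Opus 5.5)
Your outline has the right skeleton: Möbius normalization, a lower bound at $r$ given the value at $0$, an optimization, and an extremal. But the central step is missing, and the tool you name for it does not apply.

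\textbf{The Harnack step.} Harnack's inequality needs a nonnegative harmonic function, and neither piece of $a$ is one. $a\ge0$, but $\Delta a\le4/(1-|z|^2)^2$ is only an upper bound on the Laplacian. The superharmonic part $-\operatorname{Re}\Phi-\log(1+|\omega|)$ is bounded below only by $\log(1-|z|^2)\to-\infty$. In fact the two properties you record cannot bound $a(r)$ above in terms of $a(0)$ at all. Adding the positive superharmonic function $\min\{M,\epsilon\log(2/|z-r|)\}$ preserves both, raises $a(r)$ by $M$, and changes $a(0)$ by at most $\epsilon\log(2/r)$.

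Even the natural repair in the analytic case is not sharp. Apply Harnack to $-\log|h'|-\log(1-R^2)\ge0$ on $|z|<R$ and optimize over $R$. At $\mathcal{B}_f(0)=\alpha_0=(1+\tau)e^{-\tau}$, with $\tau=(1+\sqrt5)/2$, this bounds the rate of decrease at $0$ only by about $1.84\cdot2\alpha_0$. That exceeds $2\tau\alpha_0=C_0$.

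\textbf{What the paper uses instead.} There are two ingredients you lack. First, order the points so that $\mathcal{B}_f(z_1)\le\mathcal{B}_f(z_2)$ and send $z_2$ to $0$. Then only a lower bound at $w=\phi^{-1}(z_1)$ is needed, so your ``opposite inequality'' is superfluous. Choose $\theta_0$ with $|\mathbf{H}'(0)+e^{i\theta_0}\mathbf{G}'(0)|=\mathcal{B}_F(0)$. Then $F_{\theta_0}=\mathbf{H}+e^{i\theta_0}\mathbf{G}$ is a locally univalent \emph{analytic} function with $\|F_{\theta_0}\|_{\mathscr{B}_s}\le1$, the same value at $0$, and $\mathcal{B}_F\ge\mathcal{B}_{F_{\theta_0}}$ everywhere, so $\omega$ disappears from the problem. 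Second, the sharp Bonk--Minda--Yanagihara distortion theorem (Lemma \ref{Im-1}) gives $\mathcal{B}_F(w)\ge\alpha(1+x)e^{-ax}$, where $\alpha=ae^{1-a}$, $a\ge1$ and $x=2|w|/(1-|w|)$. This is the explicit bound your $\psi(t,r)$ was supposed to be.

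\textbf{The fallback and the optimization.} Your fallback does not reach the statement either. Integrating $(1-|z|^2)|\nabla\mathcal{B}_f|\le C_0$ along the geodesic yields only $|\mathcal{B}_f(z_1)-\mathcal{B}_f(z_2)|\le C_0\,\sigma(z_1,z_2)$. Since $\sigma=\operatorname{arctanh}\rho>\rho$, this is strictly weaker than the claim. Your guess that the supremum is a boundary value as $r\to0^+$ is right, but proving it is the real work. That is Lemma \ref{lem1}: $\sup_{a\ge1,\,x>0}ae^{1-a}\bigl[1-(1+x)e^{-ax}\bigr]\frac{x+2}{x}=C_0$. It requires the finite-distance bound above and a genuine two-variable argument. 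One must exclude interior critical points, via $N(a,x)=0$ and the fact that $x\mapsto G(a_+(x),x)$ is increasing. One must also control the regimes $x\to0^+$, $x\to\infty$, $a\to\infty$ and $a=1$. The value $C_0=2a_0(a_0-1)e^{1-a_0}$, with $a_0=\tau^2$, appears only as a limit along $x\to0^+$.

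\textbf{Sharpness.} You also need to commit to a specific extremal. The paper takes $f_*'(z)=\frac{\alpha_0}{(1-z)^2}\exp\bigl(-\frac{2(1+\tau)z}{1-z}\bigr)$. For it, $\mathcal{B}_{f_*}=Te^{1-T}$ with $T=(1+\tau)\frac{1-|z|^2}{|1-z|^2}$, so $\|f_*\|_{\mathscr{B}_s}=1$ and the slope at $0$ is $2\tau\alpha_0=C_0$. The factor $(1-z)^{-2}$ is what makes $\mathcal{B}_{f_*}$ a function of $T$ alone. The relevant normalization is $m=\tau$, i.e.\ $-\log\mathcal{B}_{f_*}(0)=\tau-2\log\tau$, not ``$a=\tau$'' in your notation.
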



Applying Theorem~\ref{CL-lem-2}, we relax the assumption of harmonic \(K\)-quasiregularity employed in Theorem~\ref{Thm-Cl} by replacing it with the broader condition of harmonic \((K,K_{0})\)-quasiregularity, where \(K \ge 1\) and \(K_{0} \ge 0\) are fixed constants. Consequently, we obtain the following asymptotically sharp result.

\begin{thm}\label{thm-1}
Let $f\in\mathscr{B}_h^*$ be a locally univalent harmonic $(K,K_{0})$-quasiregular mapping in $\mathbb D$,
where $K \ge 1$ and $K_{0} \ge 0$ are constants. Then, for all $z,~w\in\mathbb D$,
\be\label{eq-m21}
\left|
\mathcal{B}_{f}^{\ast}(z)-\mathcal{B}_{f}^{\ast}(w)\right|
\leq C(K,K_{0})\rho(z,w),
\ee
where $$C(K,K_{0})=
\left[C_{0}\frac{\sqrt {K+3}}{2}
+
\left(
\frac{3\sqrt{3}}{2}
+
\operatorname{coth}2
\right)
 \frac{\sqrt{K - 1}}{2}\right]\|f\|_{\mathscr{B}_{h^*,s}}+\left[
C_{0}+\left(\frac{3\sqrt{3}}{2}+\operatorname{coth}2\right)\right]
\frac{\sqrt{K_{0}}}{2}
$$
and  $C_{0}$ is the same as in Theorem \ref{CL-lem-2}.
 Moreover, inequality (\ref{eq-m21}) is asymptotically sharp
 as $(K,K_0) \to (1,0)$, and  
$$
\lim_{(K,K_0) \to (1,0)} C(K,K_0) = C_0.
$$
\end{thm}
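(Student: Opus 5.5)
We prove Theorem \ref{thm-1} by comparing $\mathcal{B}_f^{\ast}$ with $\mathcal{B}_f$, to which Theorem \ref{CL-lem-2} applies, and by controlling the difference with a second Lipschitz estimate.

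\textbf{Setup and the decomposition.} Write $f=h+\overline{g}$ with $|g'|<|h'|$ (sense-preserving and locally univalent, by the Proposition). Then $\Lambda_f=|h'|+|g'|$, $\lambda_f=|h'|-|g'|$ and $J_f=\Lambda_f\lambda_f$. The elementary identity
$$\sqrt{\Lambda_f\lambda_f}=\tfrac12(\Lambda_f+\lambda_f)-\tfrac12\bigl(\sqrt{\Lambda_f}-\sqrt{\lambda_f}\bigr)^2$$
gives
$$\mathcal{B}_f^{\ast}=\tfrac12\mathcal{B}_f+\tfrac12(1-|z|^2)\lambda_f-\tfrac12(1-|z|^2)\bigl(\sqrt{\Lambda_f}-\sqrt{\lambda_f}\bigr)^2 .$$
We would rather write
$$\mathcal{B}_f^{\ast}=\mathcal{B}_f-\Delta(z),\qquad \Delta(z):=(1-|z|^2)\bigl(\Lambda_f-\sqrt{J_f}\bigr)\ge 0,$$
and then
$$|\mathcal{B}^{\ast}_f(z)-\mathcal{B}^{\ast}_f(w)|\le|\mathcal{B}_f(z)-\mathcal{B}_f(w)|+|\Delta(z)-\Delta(w)|.$$

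\textbf{The main term.} The quasiregularity $\Lambda_f^2\le KJ_f+K_0$, together with $\Lambda_f\lambda_f=J_f$ and the Bloch bound, should give
$$\|f\|_{\mathscr{B}_s}\le \frac{\sqrt{K+3}}{2}\,\|f\|_{\mathscr{B}_{h^*,s}}+\frac{\sqrt{K_0}}{2}.$$
One route is to solve the quadratic inequality for $\Lambda_f$ in terms of $\sqrt{J_f}$ and $K_0$, then multiply by $(1-|z|^2)$ and use $(1-|z|^2)^2\le1$ on the $K_0$ part. The factor $\sqrt{K+3}/2$ reflects $\Lambda_f\le\frac{\sqrt{K+3}}{2}\sqrt{J_f}$ up to the form of the splitting; this is the algebra I would pin down first. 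Theorem \ref{CL-lem-2} then yields the $C_0$ terms.

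\textbf{The correction term.} Here I would bound $|\Delta(z)-\Delta(w)|$ by $\bigl(\tfrac{3\sqrt3}{2}+\coth 2\bigr)\bigl(\tfrac{\sqrt{K-1}}{2}\|f\|_{\mathscr{B}_{h^*,s}}+\tfrac{\sqrt{K_0}}{2}\bigr)\rho(z,w)$. I expect this step to be the main obstacle.

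The quantity $\Lambda_f-\sqrt{J_f}$ is small: it vanishes when $K=1,K_0=0$. Concretely, $\Lambda_f^2-J_f\le (K-1)J_f+K_0$, so $\Lambda_f-\sqrt{J_f}\le\sqrt{(K-1)J_f+K_0}$, which is consistent with $\sqrt{K-1}$ and $\sqrt{K_0}$ appearing.

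To obtain a Lipschitz bound, I would use Möbius invariance to reduce to $w=0$ and $z=r\in[0,1)$, via $f\circ\varphi_w$, since $\mathcal{B}_f$, $\mathcal{B}^{\ast}_f$ and $\rho$ are all invariant. I would then estimate $|\Delta(r)-\Delta(0)|$ in two regimes:
\begin{itemize}
\item For $\rho$ small, integrate the derivative along $[0,r]$. Differentiate $(1-t^2)(|h'|+|g'|)$ using the Schwarz–Pick type bounds $(1-|z|^2)^2|h''|\le$ const$\cdot\|f\|$, which produces the analytic constant $3\sqrt3/2$ from \eqref{eq1.6}. Differentiate $(1-t^2)\sqrt{J_f}$ using the dilatation $\omega=g'/h'$ and Schwarz–Pick for $\omega$.
\item For $\rho$ large, use the trivial bound $\Delta\le\sqrt{(K-1)}\,\mathcal{B}^{\ast}+\sqrt{K_0}$.
\end{itemize}
The switch between the two regimes is where a constant like $\coth 2$ presumably arises, from optimizing $\frac{1}{\rho}\cdot\tanh$ or a hyperbolic distance cutoff at $\sigma=2$.

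I am uncertain about the exact bookkeeping. The fallback is to bound each of $(1-|z|^2)|g'|$-type terms by $\frac{\sqrt{K-1}}{2}\|f\|_{\mathscr{B}_{h^*,s}}+\frac{\sqrt{K_0}}{2}$ and apply \eqref{eq1.6} to the analytic pieces.

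\textbf{Sharpness and limit.} As $(K,K_0)\to(1,0)$ the correction terms vanish and the main term tends to $C_0\|f\|_{\mathscr{B}_{h^*,s}}$. Asymptotic sharpness follows from the extremal function in Theorem \ref{CL-lem-2} with $g\equiv0$: for analytic $f$ one has $\mathcal{B}_f=\mathcal{B}_f^{\ast}$, and the functions are $1$-quasiregular. Here the ratio attains $C_0$, which equals $\lim C(K,K_0)$ when normalized by $\|f\|_{\mathscr{B}_{h^*,s}}$.
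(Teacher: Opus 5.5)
Your main-term step rests on a false inequality. You apply Theorem \ref{CL-lem-2} to $f$ itself, which costs $C_0\|f\|_{\mathscr{B}_s}$. You then assert $\|f\|_{\mathscr{B}_s}\le \frac{\sqrt{K+3}}{2}\|f\|_{\mathscr{B}_{h^*,s}}+\frac{\sqrt{K_0}}{2}$, which for $K_0=0$ amounts to $\Lambda_f\le\frac{\sqrt{K+3}}{2}\sqrt{J_f}$. This fails for every $K>1$. Take $f(z)=z+k\overline{z}$ with $k=\frac{K-1}{K+1}$; it is a locally univalent harmonic $K$-quasiregular map with $\Lambda_f=1+k$ and $J_f=1-k^2$. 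Hence $\Lambda_f=\sqrt{K}\sqrt{J_f}$ and $\|f\|_{\mathscr{B}_s}=\sqrt{K}\,\|f\|_{\mathscr{B}_{h^*,s}}$, while $\sqrt{K}>\frac{\sqrt{K+3}}{2}$.

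The factor $\frac{\sqrt{K+3}}{2}$ controls only the analytic part. Put $a=(1-|z|^2)|h'|$, $b=(1-|z|^2)|g'|$ and $M=\|f\|_{\mathscr{B}_{h^*,s}}$. The condition $(a+b)^2\le K(a^2-b^2)+K_0$ together with $a\ge b$ gives $(K+3)b^2\le(K-1)a^2+K_0$. Combined with $a^2\le M^2+b^2$, this yields $4b^2\le(K-1)M^2+K_0$ and $4a^2\le(K+3)M^2+K_0$. It says nothing comparable about $a+b$.

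The correction term is also left unproved, and your decomposition makes it worse than necessary. Your $\Delta$ equals $b+E$ with $E=a\bigl(1-\sqrt{1-|\omega|^2}\bigr)$, so it carries all of $b$. The natural estimates then give $(3\sqrt3+\coth 2)\|g\|_{\mathscr{B}_s}$, not $(\frac{3\sqrt3}{2}+\coth 2)\|g\|_{\mathscr{B}_s}$. Integrating derivatives along a radius only gives $\operatorname{arctanh}$-type (hyperbolic) bounds, not pseudo-hyperbolic ones. Carried out correctly, your route does give an asymptotically sharp estimate. However, its constant is $C_0(\sqrt K M+\sqrt{K_0})+(3\sqrt3+\coth 2)\bigl(\frac{\sqrt{K-1}}{2}M+\frac{\sqrt{K_0}}{2}\bigr)$, which is larger than $C(K,K_0)$ whenever $(K,K_0)\neq(1,0)$.

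The missing idea is to use $a$, not $\mathcal{B}_f$, as the main term. Write $\mathcal{B}_f^{\ast}=a\sqrt{1-|\omega|^2}=a-bs$, with $\omega=g'/h'$ and $s=\frac{|\omega|}{1+\sqrt{1-|\omega|^2}}\in[0,1)$. After the M\"obius reduction to $w=0$, the steps are:
\begin{enumerate}
\item Apply Theorem \ref{CL-lem-2} to the locally univalent analytic function $h\circ\varphi_w$ (note $h'\neq0$ because $J_f>0$). This gives $|a(\zeta)-a(0)|\le C_0\|h\|_{\mathscr{B}_s}|\zeta|$.
\item Split $|b(\zeta)s(\zeta)-b(0)s(0)|\le|b(\zeta)-b(0)|+b(0)|s(\zeta)-s(0)|$.
\item Handle the first term by \eqref{eq-cj-3} applied to $g\circ\varphi_w$.
\item For the second term, $S(t)=\frac{t}{1+\sqrt{1-t^2}}$ satisfies $(1-t^2)S'(t)\le\frac12$. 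Schwarz--Pick for $\omega$ along $[0,\zeta]$ then gives $|s(\zeta)-s(0)|\le\min\{\frac12\operatorname{arctanh}|\zeta|,1\}\le(\coth 2)|\zeta|$.
\end{enumerate}
The last step is where the cutoff at hyperbolic distance $2$ that you guessed actually enters. Inserting the bounds on $\|h\|_{\mathscr{B}_s}$ and $\|g\|_{\mathscr{B}_s}$ above gives exactly $C(K,K_0)$. Your sharpness argument via the analytic extremal function is fine.
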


In particular, by setting $K_{0} = 0$ in Theorem~\ref{thm-1}, we obtain the following result
by using the method of the proof for Theorem~\ref{thm-1}.
This result resolves the open problem stated in the open problem \ref{Open-1} for the equation \eqref{eq-CJ-5}, in the asymptotically sharp sense.

\begin{thm}\label{cor-1}
Let $f\in\mathscr{B}_h^*$ be a locally univalent harmonic $K$-quasiregular mapping in $\mathbb D$,
where $K \ge 1$ is a constant. Then, for all $z,~w\in\mathbb D$,
\be\label{eq-m22}
\left|
\mathcal{B}_{f}^{\ast}(z)-\mathcal{B}_{f}^{\ast}(w)\right|
\leq C(K)\|f\|_{\mathscr{B}_{h^*,s}}\rho(z,w),
\ee where $k=(K-1)/(K+1)$,
$$C(K)=
C_{0}\frac{(K+1)}{2\sqrt K}
+
\frac{kL(K)}{2}\left(\frac{3\sqrt{3}}{2}
+
\operatorname{coth}L(K)\right)
\frac{(K-1)}{2\sqrt{K}},
$$

$$
L(K)=\frac{2}{1+\sqrt{1-k^2}},
$$
and  $C_{0}$ is the same as in Theorem \ref{CL-lem-2}.
 Moreover, inequality (\ref{eq-m22}) is asymptotically sharp
as $K \to 1^{+}$, and  
$$
\lim_{K \to 1^{+}} C(K) = C_0.
$$
\end{thm}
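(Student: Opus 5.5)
Write $f=h+\overline{g}$ with $|\omega|=|g'/h'|\le k=(K-1)/(K+1)$ in $\mathbb D$; this follows from $K$-quasiregularity and local univalence, and $\omega=g'/h'$ is analytic with $|\omega|\le k$. Then
\[
\mathcal B_f^*(z)=(1-|z|^2)|h'(z)|\sqrt{1-|\omega(z)|^2},
\]
and I would split
\[
\mathcal B_f^*(z)-\mathcal B_f^*(w)=\sqrt{1-|\omega(z)|^2}\,\bigl(\mathcal B_{h}(z)-\mathcal B_h(w)\bigr)+\mathcal B_h(w)\Bigl(\sqrt{1-|\omega(z)|^2}-\sqrt{1-|\omega(w)|^2}\Bigr),
\]
where $\mathcal B_h(z)=(1-|z|^2)|h'(z)|$.

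For the first term, apply Theorem~\ref{CL-lem-2} to the locally univalent analytic function $h$. This gives $|\mathcal B_h(z)-\mathcal B_h(w)|\le C_0\|h\|_{\mathscr B_s}\rho(z,w)$. Since $\mathcal B_f^*\ge \sqrt{1-k^2}\,\mathcal B_h$ and $\sqrt{1-k^2}=2\sqrt K/(K+1)$, we get $\|h\|_{\mathscr B_s}\le \frac{K+1}{2\sqrt K}\|f\|_{\mathscr B_{h^*,s}}$. Bounding $\sqrt{1-|\omega|^2}\le1$ then yields the term $C_0\frac{K+1}{2\sqrt K}\|f\|_{\mathscr{B}_{h^*,s}}\rho(z,w)$.

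For the second term we need a Lipschitz estimate for $|\omega|$ in the pseudo-hyperbolic metric. Schwarz--Pick applied to $\omega/k$ gives
\[
\bigl||\omega(z)|-|\omega(w)|\bigr|\le k\,\rho(\omega(z)/k,\omega(w)/k)\le k\rho(z,w)\cdot(\text{bounded factor}).
\]
Combining this with $|\sqrt{1-a^2}-\sqrt{1-b^2}|\le \frac{k}{\sqrt{1-k^2}}|a-b|$ for $a,b\le k$ would give an $O(k^2)$ bound, which also vanishes as $K\to1$.

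To match the stated form, which carries the factor $\frac{kL(K)}{2}\bigl(\frac{3\sqrt3}{2}+\coth L(K)\bigr)$, I expect the authors handle it differently. Their route appears to be: write $\sqrt{1-|\omega|^2}$ in terms of $\sigma$-type quantities, bound $\mathcal B_h(w)\le\mathcal B_h(z)+\frac{3\sqrt3}{2}\|h\|\rho$ via \eqref{eq-cj-3}, and use a hyperbolic-distance estimate $\sigma(z,w)\le \rho\cdot\coth$-type bound. Here $\sigma\le \rho\, L/\tanh L$ holds when $\sigma\le L$, which produces $\coth L(K)$ with $L(K)=2/(1+\sqrt{1-k^2})$.

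Concretely, I would split into two cases. When $\sigma(z,w)\ge L(K)$, $\rho$ is bounded below, so the crude bound $|\mathcal B^*_f(z)-\mathcal B^*_f(w)|\le \|f\|\le \|f\|\rho\coth L$ suffices. When $\sigma<L$, use the Schwarz--Pick estimate on $\omega$ together with \eqref{eq-cj-3}.

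The main obstacle is bookkeeping this second term so that the constant exactly matches $C(K)$. The sharpness claim is easier: as $K\to1$, $k\to0$, the second term vanishes and $C(K)\to C_0$. Extremal analytic functions for Theorem~\ref{CL-lem-2} (with $\omega\equiv0$) are $1$-quasiregular, so no smaller limit constant is possible.
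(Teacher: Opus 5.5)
\textbf{There is a genuine gap: your argument proves a Lipschitz bound, but not with the constant $C(K)$ in the statement.}

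What works: your identity $\mathcal B_f^*(z)-\mathcal B_f^*(w)=q(z)\bigl(\mathcal B_h(z)-\mathcal B_h(w)\bigr)+\mathcal B_h(w)\bigl(q(z)-q(w)\bigr)$, with $q=\sqrt{1-|\omega|^2}$, is correct. So is the first-term estimate: Theorem~\ref{CL-lem-2} for $h$, together with $\|h\|_{\mathscr B_s}\le\frac{K+1}{2\sqrt K}\|f\|_{\mathscr B_{h^*,s}}$ from $q\ge\sqrt{1-k^2}$. The sharpness and limit discussion is also fine.

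The problem is the second term. The estimate you actually carry out does not give $C(K)$, and the part meant to produce $C(K)$ is only a guess. Take the best choice of your ``bounded factor'', namely $1$: since $\bigl||u|-|v|\bigr|\le\rho(u,v)$, you get $\bigl||\omega(z)|-|\omega(w)|\bigr|\le k\rho(z,w)$. Your route then gives
\[
\mathcal B_h(w)\,|q(z)-q(w)|\le\|h\|_{\mathscr B_s}\,\frac{k^2}{\sqrt{1-k^2}}\,\rho(z,w)\le\frac{(K-1)^2}{4K}\,\|f\|_{\mathscr B_{h^*,s}}\,\rho(z,w).
\]
This grows like $K/4$, while the second summand of $C(K)$ grows only like $\frac{\sqrt K}{2}\bigl(\frac{3\sqrt3}{2}+\coth 2\bigr)$.

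So you obtain a Lipschitz estimate with a different constant, which also tends to $C_0$. It is smaller than $C(K)$ for moderate $K$ (up to roughly $K\approx 27$). But it does not imply \eqref{eq-m22} for large $K$: at $K=100$ your second summand is about $24.5$, against about $14.9$ for $C(K)$. With the naive factor $2$ it exceeds $C(K)$ for every $K>1$.

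The case split at $\sigma(z,w)=L(K)$ and the use of \eqref{eq-cj-3} for $h$ do not produce $C(K)$. In $C(K)$ the constant $\frac{3\sqrt3}{2}$ multiplies $\frac{K-1}{2\sqrt K}$, which is the bound for $\|g\|_{\mathscr B_s}$, not for $\|h\|_{\mathscr B_s}$.

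The missing idea is to move the correction onto the co-analytic part. Since $1-q=\frac{|\omega|^2}{1+q}$ and $\mathcal B_g=|\omega|\,\mathcal B_h$, one has $\mathcal B_h-\mathcal B_f^*=\mathcal B_h(1-q)=\mathcal B_g\,s$, where $s=S(|\omega|)$ and $S(t)=\frac{t}{1+\sqrt{1-t^2}}$. Hence
\[
|\mathcal B_f^*(z)-\mathcal B_f^*(w)|\le|\mathcal B_h(z)-\mathcal B_h(w)|+s(z)\,|\mathcal B_g(z)-\mathcal B_g(w)|+\mathcal B_g(w)\,|s(z)-s(w)|.
\]
The pieces are then handled as follows.
\begin{enumerate}
\item Apply \eqref{eq-cj-3} to $g$, and use $s\le S(k)=\frac{kL(K)}{2}$.
\item For the last term, the infinitesimal Schwarz--Pick lemma for $\omega/k$ supplies the factor $k(1-|\omega|^2)$. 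This cancels the singularity of $S'(t)=\frac{1}{\sqrt{1-t^2}(1+\sqrt{1-t^2})}$, because $S'(t)(1-t^2)\le\frac12$.
\item Integrating along the radius, after moving $w$ to $0$, gives $|s(z)-s(w)|\le\frac k2\operatorname{arctanh}\rho(z,w)$.
\item Cap this by $\frac{kL}{2}$ and use $\min\{\frac12\operatorname{arctanh}r,\frac L2\}\le\frac L2(\coth L)\,r$; this produces the $\coth L(K)$ term.
\end{enumerate}
Both correction terms are multiplied by $\|g\|_{\mathscr B_s}\le k\|h\|_{\mathscr B_s}\le\frac{K-1}{2\sqrt K}\|f\|_{\mathscr B_{h^*,s}}$, which gives exactly the second summand of $C(K)$.

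Your estimate loses the factor $1-|\omega|^2$ when you pass through $\bigl||\omega(z)|-|\omega(w)|\bigr|\le k\rho(z,w)$ and the Euclidean Lipschitz constant $k/\sqrt{1-k^2}$ of $\sqrt{1-t^2}$. That is where your extra factor of order $\sqrt K$ comes from.
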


\begin{rem}
Elementary calculations show that 
\beqq
C(K)&\leq& C_{0}\frac{(1+K)}{2}+\left(
\frac{3\sqrt{3}}{2}
+
\operatorname{coth}2
\right)\frac{(1+K)}{4\sqrt{2}}\\
&=&(1+K)\left(\frac{C_{0}}{2}+\frac{3\sqrt{3}}{8\sqrt{2}}
+\frac{\operatorname{coth}2}{4\sqrt{2}}\right)\\
&\approx&1.4824(1+K),
\eeqq which yields that 
$$C(K)<2.8587(K+1).$$
Consequently, Theorem \ref{cor-1} is also an improvement  of Theorem \ref{Thm-Cl}.
\end{rem}


The proofs of Theorems \ref{CL-lem-2}, \ref{thm-1} and  \ref{cor-1}
will be given in Section \ref{sec-2}.


\section{The proofs of the main results}\label{sec-2}


\subsection*{Proof of Theorem \ref{CL-lem-2}} Prior to proving this theorem, we shall recall or prove some necessary Lemmas.

\begin{Lem}{\rm (\cite[Corollary 2]{BMY-96})}\label{Im-1}
Let $f\in\mathscr{B}$ be a locally univalent analytic function in $\mathbb{D}$ satisfying $\|f\|_{ \mathscr{B}_{s}}\leq1$, $f(0)=0$ and $f'(0)=\alpha\in(0,1]$.
Then, for $z\in\mathbb{D}$,
$$\mathcal{B}_{f}(z)\geq(1+m(\alpha))\frac{1+|z|}{1-|z|}e^{\left[1-(1+m(\alpha))\frac{1+|z|}{1-|z|}\right]}$$
with equality at $z=re^{i\theta}$, $r\in(0,1)$, if and only if $f(z)=e^{i\theta}F_{\alpha}^{0}(e^{-i\theta}z)$,
where $$(1+m(\alpha))e^{-m(\alpha)}=\alpha$$ and 
$$F_{\alpha}^{0}(z)=-\frac{1}{2}e^{\left[1-(1+m(\alpha))\frac{1+z}{1-z}\right]}+\frac{1}{2}e^{-m(\alpha)}.$$
\end{Lem}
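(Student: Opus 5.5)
Since this lemma is quoted from \cite[Corollary 2]{BMY-96}, the paper could simply cite it. If I had to prove it, I would use a \emph{differential comparison along the radius}.

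First, rotations preserve both the hypotheses and $\mathcal{B}_f$. So it suffices to treat $z=r\in(0,1)$ and to compare with $F^{0}_{\alpha}$ on the segment $[0,r]$. Next I check the extremal function. Put $x(t)=(1+m(\alpha))\frac{1+t}{1-t}$. For $t\in[0,1)$ one has
\[
(F_\alpha^0)'(t)=\frac{1+m(\alpha)}{(1-t)^2}e^{1-x(t)},
\qquad
\mathcal{B}_{F^0_\alpha}(t)=x(t)e^{1-x(t)}.
\]
Since $\max_{x>0} xe^{1-x}=1$, we get $\|F_\alpha^0\|_{\mathscr{B}_s}\le 1$. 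The relation $(1+m)e^{-m}=\alpha$ is exactly $(F^0_\alpha)'(0)=\alpha$. So the claimed bound is $\mathcal{B}_f(r)\ge \Psi(r)$, where $\Psi(r):=x(r)e^{1-x(r)}$.

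The key step is a pointwise ``Schwarz--Pick'' inequality for $\mathcal{B}_f$. Fix $a\in\mathbb{D}$ and form the normalized function $g=(f\circ\varphi_a-f(a))$. It is still locally univalent with $\|g\|_{\mathscr{B}_s}\le1$, because the Bloch seminorm is M\"obius invariant, and $\mathcal{B}_g(0)=\mathcal{B}_f(a)=:\beta$. Since $f'$ never vanishes, the function $\log\bigl((1-|z|^2)|f'(z)|\bigr)=\log|f'|+\log(1-|z|^2)$ is $\le 0$. Here $\log|f'|$ is harmonic, and $\log(1-|z|^2)$ is explicit. I would use this, together with a sharp coefficient estimate for $g''(0)/g'(0)$ in terms of $|g'(0)|=\beta$, to obtain an inequality of the form
\[
\Bigl|(1-|a|^2)\frac{f''(a)}{f'(a)}-2\bar a\Bigr|\le \Phi\bigl(\mathcal{B}_f(a)\bigr).
\]
The sharp estimate would come from the Herglotz/Harnack representation of the nonpositive function $\log(1-|z|^2)+\log|g'|$ relative to its value at $0$. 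I expect this $\Phi$ to be exactly the function that makes $\Psi$ satisfy the corresponding equality, namely
\[
\Bigl|\tfrac{d}{dt}\log \mathcal{B}\Bigr|(1-t^2)=\Phi(\mathcal{B}).
\]
The reason is that the one-parameter family $F^0_\alpha$, pre-composed with automorphisms, should realize equality.

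Then I observe that $\frac{d}{dt}\log\mathcal{B}_f(t)\ge -\frac{1}{1-t^2}\Phi(\mathcal{B}_f(t))$ along the radius. This holds because the radial derivative of $\log\mathcal{B}_f$ is
\[
\operatorname{Re}\Bigl(\frac{f''}{f'}\Bigr)-\frac{2t}{1-t^2}=\frac{1}{1-t^2}\operatorname{Re}\Bigl((1-t^2)\frac{f''}{f'}-2t\Bigr).
\]
The initial data agree, $\mathcal{B}_f(0)=\alpha=\Psi(0)$, and $\Psi$ solves the equality case of the ODE. An ODE comparison argument on $[0,r]$ therefore gives $\mathcal{B}_f(r)\ge\Psi(r)$. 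Equality at $r$ forces equality throughout the differential inequality, and hence in the coefficient estimate at every point. This pins down $f$ as the stated rotation of $F^0_\alpha$.

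The main obstacle is the second step: finding the \emph{sharp} form of $\Phi$ and proving it. Near the region where $\mathcal{B}$ is small, the Harnack-type estimate must be exact, and it is not obvious that a single harmonic-majorant argument gives the function matching $xe^{1-x}$. I would first try the Herglotz representation, writing $-\log\bigl((1-|z|^2)|g'(z)|\bigr)$ in a form that produces the factor $\frac{1+z}{1-z}$ seen in $F^0_\alpha$. If that fails, I would fall back on citing \cite{BMY-96} directly.
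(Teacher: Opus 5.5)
The paper does not prove this lemma; it simply quotes \cite[Corollary 2]{BMY-96}, so your fallback is what the paper does. As a proof, however, your proposal has a genuine gap, and it is exactly the step you flag. The rotation, the check of $F^0_\alpha$ and the radial-derivative formula are routine, but the pointwise inequality carries all the content of the lemma, and you neither state it precisely nor prove it.

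Reading it off from $\Psi$ fixes $\Phi$. There $(1-t^2)\frac{d}{dt}\log\Psi=2(1-X)$ with $X=(1+m)\frac{1+t}{1-t}\ge1$, so $\Phi(\beta)=2m(\beta)$, where $m(\beta)\ge0$ solves $(1+m)e^{-m}=\beta$. After M\"obius normalization, the claim to prove is
\[
\Bigl|\frac{g''(0)}{g'(0)}\Bigr|\le 2m\bigl(|g'(0)|\bigr)\qquad\text{for all locally univalent } g,\ \|g\|_{\mathscr{B}_s}\le1 .
\]
The route you sketch does not give this as described. The function $\log(1-|z|^2)+\log|g'|$ is superharmonic, not harmonic, so it has no Herglotz representation. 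Nor does $-\log(1-|z|^2)$ have a harmonic majorant on $\mathbb{D}$. Harnack on round disks $|z|<\rho$, using $\log|g'|\le\log\frac{1}{1-\rho^2}$ there and optimizing in $\rho$, gives only about $2\sqrt2\,m$ for small $m$. Fed into your comparison, such a non-sharp $\Phi$ yields a strictly weaker bound than $\Psi$ and no extremal.

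The missing idea is to work on horodisks. For $\zeta\in\mathbb{T}$ write $\log(1-|z|^2)=\log P_\zeta(z)+2\log|\zeta-z|$, where $P_\zeta(z)=\frac{1-|z|^2}{|\zeta-z|^2}$. The Bloch condition then says that the \emph{harmonic} function $V=\log|g'|+2\log|\zeta-z|$ satisfies $V\le-\log P_\zeta$. On the horodisk $\{P_\zeta>s\}$, a disk of radius $\frac1{1+s}$ centered at $\frac{s}{1+s}\zeta$, the function $W=-\log s-V$ is nonnegative and harmonic. Harnack's gradient inequality at $0$ gives
\[
\Bigl|\frac{g''(0)}{g'(0)}-2\bar\zeta\Bigr|\le\frac{2}{1-s}\log\frac{1}{s\beta},\qquad \beta=|g'(0)|.
\]
The choice $s=\frac{1}{1+m(\beta)}$ makes the right side $2(1+m)$, and intersecting these disks over $\zeta\in\mathbb{T}$ gives $|g''(0)/g'(0)|\le 2m$. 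Equality forces $W$ to be a multiple of a Poisson kernel of the horodisk, and this is what identifies $F^0_\alpha$.

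Your comparison step also needs two repairs.
\begin{enumerate}
\item Near $\beta=1$ we have $m(\beta)\sim\sqrt{2(1-\beta)}$, which is not Lipschitz, and for $\alpha=1$ the constant $\mathcal{B}\equiv1$ also solves the equality ODE. So you must compare with the minimal solution. For instance, compare with $\Psi$ for $\alpha'<\alpha$, where $m$ is Lipschitz in $\log\beta$ on $\beta\le\alpha'<1$, and then let $\alpha'\to\alpha$.
\item For the equality case, once $\log\mathcal{B}_f\ge\log\Psi$ is known, the monotonicity of $m$ makes $\log\mathcal{B}_f-\log\Psi$ nondecreasing along the radius. Hence equality at $r$ propagates to all of $[0,r]$, and in particular to the coefficient inequality.
\end{enumerate}
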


\begin{lem}\label{CL-lem-1}
Let $f\in\mathscr{B}$ be a locally univalent analytic function in $\mathbb{D}$ satisfying $\|f\|_{ \mathscr{B}_{s}}\leq1$ and $f'(0)=\alpha\in(0,1]$.
Then, for $z\in\mathbb{D}$,
$$\mathcal{B}_{f}(z)\geq(1+m(\alpha))\frac{1+|z|}{1-|z|}e^{\left[1-(1+m(\alpha))\frac{1+|z|}{1-|z|}\right]}$$
with equality at $z=re^{i\theta}$, $r\in(0,1)$, if and only if $f(z)=e^{i\theta}F_{\alpha}(e^{-i\theta}z)+C$,
where \be\label{eq-CJ-12}(1+m(\alpha))e^{-m(\alpha)}=\alpha,\ee 
$$F_{\alpha}(z)=-\frac{1}{2}e^{\left[1-(1+m(\alpha))\frac{1+z}{1-z}\right]}$$
and $C$ is a constant.
\end{lem}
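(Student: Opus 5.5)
The plan is to reduce Lemma \ref{CL-lem-1} to Lemma \ref{Im-1} by subtracting a constant and then checking the equality statement. Given $f$ as in Lemma \ref{CL-lem-1}, set $g(z)=f(z)-f(0)$. Then $g$ is analytic and locally univalent in $\mathbb{D}$. It satisfies $g(0)=0$ and $g'(0)=f'(0)=\alpha\in(0,1]$. Since $g'=f'$, we have $\Lambda_g=|g'|=|f'|=\Lambda_f$, so $\mathcal{B}_g=\mathcal{B}_f$ pointwise and $\|g\|_{\mathscr{B}_s}=\|f\|_{\mathscr{B}_s}\le 1$. All hypotheses of Lemma \ref{Im-1} therefore hold for $g$. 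Applying it gives the stated lower bound for $\mathcal{B}_g(z)=\mathcal{B}_f(z)$, with the same $m(\alpha)$ defined by \eqref{eq-CJ-12}.

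For the equality case, suppose equality holds at $z=re^{i\theta}$ with $r\in(0,1)$. Then equality holds for $g$ at the same point, and Lemma \ref{Im-1} gives $g(z)=e^{i\theta}F^{0}_{\alpha}(e^{-i\theta}z)$. Since $F^{0}_{\alpha}=F_{\alpha}+\tfrac12 e^{-m(\alpha)}$, this reads
$$f(z)=e^{i\theta}F_{\alpha}(e^{-i\theta}z)+C,\qquad C=f(0)+\tfrac12 e^{i\theta}e^{-m(\alpha)}.$$
Conversely, suppose $f(z)=e^{i\theta}F_{\alpha}(e^{-i\theta}z)+C$ for some constant $C$. Then $f-f(0)=e^{i\theta}F^{0}_{\alpha}(e^{-i\theta}z)$, because $F_\alpha(0)=-\tfrac12 e^{-m(\alpha)}$ and so $f(0)=C-\tfrac12 e^{i\theta}e^{-m(\alpha)}$. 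By Lemma \ref{Im-1} this function is admissible and attains equality at $re^{i\theta}$. Since $\mathcal{B}_f=\mathcal{B}_{f-f(0)}$, equality holds for $f$ as well.

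There is no real obstacle. The only point needing care is the bookkeeping of the additive constant: $F_\alpha$ and $F^0_\alpha$ differ by the constant $\tfrac12 e^{-m(\alpha)}$, so the family $\{e^{i\theta}F_\alpha(e^{-i\theta}\cdot)+C\}$ is exactly the family of functions whose normalization at $0$ is the extremal function of Lemma \ref{Im-1}. One also checks that $\alpha=f'(0)$ is unchanged by the translation, so $m(\alpha)$ is the same quantity in both lemmas.
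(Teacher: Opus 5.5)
Your proposal is correct and matches the paper's proof: set $f_1=f-f(0)$, observe that it satisfies the hypotheses of Lemma~\ref{Im-1} with the same $\alpha$ and the same $\mathcal{B}$, and apply that lemma. Your explicit tracking of the constant $C=f(0)+\tfrac12 e^{i\theta}e^{-m(\alpha)}$ in both directions of the equality case fills in a step the paper leaves implicit.
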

\bpf For $f\in\mathscr{B}$, let $$f_{1}(z)=f(z)-f(0),~~z\in\mathbb{D}.$$
Then  $\|f_{1}\|_{ \mathscr{B}_{s}}=\|f\|_{ \mathscr{B}_{s}}\leq1$, $f_{1}(0)=0$ and $f_{1}'(0)=\alpha\in(0,1]$.
Applying Lemma \ref{Im-1} to $f_{1}$ yields the desired result. 
\epf

\blem\label{lem2}
For all \(a,x\in\mathbb{R}\), we have
\begin{align*}
&(1+x)^2(x+2)
\Bigl[
-2(1+x)^2a^3
+
(3x^2+9x+4)a^2
+
(x^2+2x+4)a
-
(x+2)
\Bigr]
\\
&\quad
-
x[a(1+x)-1]
\Bigl[
a(2x^3+8x^2+13x+8)-x-2
\Bigr]
\\
&=
-N(a,x)
\Bigl[
2a(1+x)^3+x^3+4x^2+6x+2
\Bigr],
\end{align*}
where
$
N(a,x)
=
(x^2+3x+2)a^2
-
(x^2+6x+6)a
+
x+2.
$
\elem

\begin{proof}
Expanding the left-hand side and collecting the powers of \(a\), we
obtain
\begin{align*}
(-2x^5-12x^4-28x^3-32x^2-18x-4)a^3
&+
(x^5+11x^4+34x^3+46x^2+30x+8)a^2
\\
&\quad
+
(x^5+8x^4+26x^3+44x^2+34x+8)a
\\
&\quad
-x^4-6x^3-14x^2-14x-4.
\end{align*}

On the other hand,
\begin{align*}
-N(a,x)
\Bigl[
2a(1+x)^3+x^3+4x^2+6x+2
\Bigr]
&=
-\Bigl[
(x^2+3x+2)a^2
-
(x^2+6x+6)a
+
x+2
\Bigr]
\\
&\qquad\times
\Bigl[
2a(1+x)^3+x^3+4x^2+6x+2
\Bigr].
\end{align*}
Collecting the powers of \(a\), this becomes
\begin{align*}
&
-2(x^2+3x+2)(1+x)^3a^3
\\
&\quad
+
\Bigl[
2(x^2+6x+6)(1+x)^3
-
(x^2+3x+2)(x^3+4x^2+6x+2)
\Bigr]a^2
\\
&\quad
+
\Bigl[
(x^2+6x+6)(x^3+4x^2+6x+2)
-
2(x+2)(1+x)^3
\Bigr]a
\\
&\quad
-
(x+2)(x^3+4x^2+6x+2).
\end{align*}
The coefficient of \(a^3\) is
\beqq
-2(x^2+3x+2)(1+x)^3
=
-2x^5-12x^4-28x^3
-32x^2-18x-4.
\eeqq
The coefficient of \(a^2\) is
\begin{align*}
&2(x^2+6x+6)(1+x)^3
-
(x^2+3x+2)(x^3+4x^2+6x+2)
\\
&=
x^5+11x^4+34x^3+46x^2+30x+8.
\end{align*}
The coefficient of \(a\) is
\begin{align*}
&(x^2+6x+6)(x^3+4x^2+6x+2)
-
2(x+2)(1+x)^3
\\
&=
x^5+8x^4+26x^3+44x^2+34x+8.
\end{align*}
Finally, the last term is
\beqq
-(x+2)(x^3+4x^2+6x+2)
=
-x^4-6x^3-14x^2-14x-4.
\eeqq
Therefore, the right-hand side is equal to
\begin{align*}
(-2x^5-12x^4-28x^3-32x^2-18x-4)a^3
&
+
(x^5+11x^4+34x^3+46x^2+30x+8)a^2
\\
&\quad
+
(x^5+8x^4+26x^3+44x^2+34x+8)a
\\
&\quad
-x^4-6x^3-14x^2-14x-4,
\end{align*}
which is exactly the expansion of the left-hand side. Hence the
identity follows.
\end{proof}

\blem\label{lem1}
For $a\geq1$ and $x>0$, let
\beqq
P(a,x)
=
ae^{1-a}
\left[1-(1+x)e^{-ax}\right]\frac{(x+2)}{x}.
\eeqq
Then
\beqq
\sup_{a\geq1,\ x>0}P(a,x)
=
(4+2\sqrt5)e^{-\left(\frac{1+\sqrt5}{2}\right)}.
\eeqq
\elem

\begin{proof}
Since, for  $a\geq1$ and $x>0$, 
\beqq
\log(1+x)<x\leq ax,
\eeqq
we have
\beqq
(1+x)e^{-ax}<1.
\eeqq
Consequently, $$P(a,x)>0.$$

On the open set
\beqq
D^\circ=\{(a,x):a>1,\ x>0\},
\eeqq
define
\beqq
\ell(a,x)=\log P(a,x).
\eeqq
Then
\beqq
\ell(a,x)
=
\log a+1-a+\log(x+2)-\log x
+
\log\left[1-(1+x)e^{-ax}\right].
\eeqq
Differentiating directly, we obtain
\beqq
\ell_a
=
\frac1a-1+
\frac{x(1+x)e^{-ax}}
{1-(1+x)e^{-ax}}
\eeqq
and
\beqq
\ell_x
=
-\frac{2}{x(x+2)}
+
\frac{[a(1+x)-1]e^{-ax}}
{1-(1+x)e^{-ax}}.
\eeqq

We begin by showing that $P$ has no critical points in
$D^\circ$. Since \(P(a,x)>0\) on \(D^\circ\), we have
\beq\label{eq-0}
P_a(a,x)=P(a,x)\ell_a(a,x)
\eeq
and 
\beqq
P_x(a,x)=P(a,x)\ell_x(a,x).
\eeqq
Thus, a point of \(D^\circ\) is a critical point of \(P\)
if and only if it is a critical point of \(\ell\).
Assume, to the contrary, that
\beqq
\ell_a(a,x)=0,
\qquad
\ell_x(a,x)=0.
\eeqq
The equation $\ell_a=0$ gives
\beq\label{eq-1}
e^{-ax}
=
\frac{a-1}
{(1+x)[a(1+x)-1]}.
\eeq
That is, this relation can be written as $G(a,x)=0$, where
\beqq
G(a,x)
=
ax-
\log
\frac{(1+x)[a(1+x)-1]}{a-1}.
\eeqq
Substituting the expression for $e^{-ax}$ from \eqref{eq-1} into
$\ell_x=0$ yields
\beqq
-\frac{2}{x(x+2)}
+
\frac{(a-1)[a(1+x)-1]}
{ax(1+x)}
=
0.
\eeqq
Multiplying by $ax(1+x)(x+2)$ and collecting the powers of
$a$, we may rewrite this equation as
\beqq
N(a,x)=0,
\eeqq
where
\beqq
N(a,x)
=
(x^2+3x+2)a^2
-
(x^2+6x+6)a
+
x+2.
\eeqq

Now fix $x>0$. The polynomial $N(a,x)$ is quadratic in $a$ with
positive leading coefficient. Since
\beqq
N(0,x)=x+2>0
\qquad \text{and} \qquad 
N(1,x)=-2(x+1)<0,
\eeqq
it has a root in $(0,1)$. In addition,
\beqq
N(1,x)<0
\qquad  \text{and} \qquad 
\lim_{a\to+\infty}N(a,x)=+\infty,
\eeqq
so it has a root in $(1,\infty)$. Since $N(a,x)$ is quadratic, these are its only two roots. Hence
$N(a,x)=0$ has exactly one root in $(1,\infty)$, which we denote by
$a_+(x)$.

Since $a_+(x)$ is the larger root and the leading coefficient of
$N(a,x)$ is positive, we have
\beqq
N_a(a_+(x),x)>0.
\eeqq
The implicit function theorem  shows that $a_+(x)$ is
continuously differentiable, with
\beqq
a_+'(x)
=
-\frac{N_x(a_+(x),x)}
{N_a(a_+(x),x)}.
\eeqq

Now define
\beqq
H(x) = G(a_+(x), x).
\eeqq
Applying the chain rule yields
\beqq
H'(x)
=
G_x(a_+(x), x)
-
G_a(a_+(x), x)
\frac{N_x(a_+(x), x)}
{N_a(a_+(x), x)},
\eeqq
which can be rewritten as
\beqq
H'(x)
=
\frac{
G_x(a_+(x), x) N_a(a_+(x), x)
-
G_a(a_+(x), x) N_x(a_+(x), x)
}
{N_a(a_+(x), x)}.
\eeqq

The relevant partial derivatives are given by
\beq\label{eq1}
G_a(a_+(x), x)
=
x-\frac{1+x}{a_+(x)(1+x)-1}
+\frac1{a_+(x)-1},
\eeq
\beqq
G_x(a_+(x), x)
=
a_+(x)-\frac1{1+x}
-\frac{a_+(x)}{a_+(x)(1+x)-1},
\eeqq
\beqq
N_a(a_+(x), x)
=
2(x^2+3x+2)a_+(x)-(x^2+6x+6),
\eeqq
and
\beqq
N_x(a_+(x), x)
=
(2x+3)a_+(x)^2-(2x+6)a_+(x)+1.
\eeqq

We now compute the numerator of \( H'(x) \) explicitly. 

Note that \( N(a,x) \) can be expressed as
\beqq
N(a,x)
=
(x+2)(a-1)[a(1+x)-1]-2a(1+x).
\eeqq
Since \( N(a_+(x), x) = 0 \), we have
\beq\label{eq6}
(x+2)(a_+(x)-1)[a_+(x)(1+x)-1]
=
2a_+(x)(1+x).
\eeq
This implies the useful identities
\beq\label{eq3}
\frac{1}{
(a_+(x)-1)[a_+(x)(1+x)-1]
}
=
\frac{x+2}{2a_+(x)(1+x)}
\eeq
and
\beq\label{eq4}
\frac{1}{a_+(x)(1+x)-1}
=
\frac{(x+2)(a_+(x)-1)}
{2a_+(x)(1+x)}.
\eeq

Using \eqref{eq1} and \eqref{eq3}, we simplify \( G_a \) as follows:
\beqq
\begin{aligned}
G_a(a_+(x), x)
&=
x-\frac{1+x}{a_+(x)(1+x)-1}
+\frac1{a_+(x)-1}
\\
&=
x+
\frac{
-(1+x)(a_+(x)-1)+a_+(x)(1+x)-1
}{
(a_+(x)-1)[a_+(x)(1+x)-1]
}
\\
&=
x+
\frac{x}{
(a_+(x)-1)[a_+(x)(1+x)-1]
}
\\
&=
x+\frac{x(x+2)}{2a_+(x)(1+x)}
\\
&=
\frac{
x[2a_+(x)(1+x)+x+2]
}{
2a_+(x)(1+x)
}.
\end{aligned}
\eeqq

Multiplying \eqref{eq4} by $a_+(x) $, we obtain
\beqq
\frac{a_+(x)}{a_+(x)(1+x)-1}
=
\frac{(x+2)(a_+(x)-1)}
{2(1+x)}.
\eeqq
Consequently, $ G_x(a_+(x), x) $ simplifies to
\beqq
\begin{aligned}
G_x(a_+(x), x)
&=
a_+(x)-\frac1{1+x}
-\frac{a_+(x)}{a_+(x)(1+x)-1}
\\
&=
a_+(x)-\frac1{1+x}
-\frac{(x+2)(a_+(x)-1)}{2(1+x)}
\\
&=
\frac{
2a_+(x)(1+x)-2-(x+2)(a_+(x)-1)
}{
2(1+x)
}
\\
&=
\frac{
2a_+(x)+2xa_+(x)-2
-xa_+(x)-2a_+(x)+x+2
}{
2(1+x)
}
\\
&=
\frac{x[a_+(x)+1]}{2(1+x)}.
\end{aligned}
\eeqq

Substituting the simplified forms of \( G_x \) and \( G_a \) into the numerator of \( H'(x) \), we get
\begin{align*}
&
G_x(a_+(x), x) N_a(a_+(x), x)
-
G_a(a_+(x), x) N_x(a_+(x), x)
\\
&=
\frac{x}{2a_+(x)(1+x)}
\Bigl\{
a_+(x)[a_+(x)+1]N_a(a_+(x), x)
-
[2a_+(x)(1+x)+x+2]N_x(a_+(x), x)
\Bigr\}.
\end{align*}

Replacing \( N_a \) and \( N_x \) with their explicit formulas, expanding, and collecting terms in powers of \( a_+(x) \), we obtain
\begin{align*}
&
a_+(x)[a_+(x)+1]N_a(a_+(x), x)
-
[2a_+(x)(1+x)+x+2]N_x(a_+(x), x)
\\
&=
-2(1+x)^2a_+(x)^3
+
(3x^2+9x+4)a_+(x)^2
+
(x^2+2x+4)a_+(x)
-
(x+2).
\end{align*}

By applying Lemma~\ref{lem2} and using the relation \( N(a_+(x), x)=0 \), the cubic polynomial above can be factored as
\begin{align*}
&
-2(1+x)^2a_+(x)^3
+
(3x^2+9x+4)a_+(x)^2
+
(x^2+2x+4)a_+(x)
-
(x+2)
\\
&=
\frac{
x[a_+(x)(1+x)-1]
\left[
a_+(x)(2x^3+8x^2+13x+8)-x-2
\right]
}{
(1+x)^2(x+2)
}.
\end{align*}

Therefore, the numerator of \( H'(x) \) becomes
\begin{align*}
&
G_x(a_+(x), x) N_a(a_+(x), x)
-
G_a(a_+(x), x) N_x(a_+(x), x)
\\
&=
\frac{
x^2[a_+(x)(1+x)-1]
\left[
a_+(x)(2x^3+8x^2+13x+8)-x-2
\right]
}{
2a_+(x)(1+x)^3(x+2)
}.
\end{align*}

Using the identity \eqref{eq6} once more, we eliminate the factor \( [a_+(x)(1+x)-1] \) and simplify to
\beqq
G_x(a_+(x), x) N_a(a_+(x), x)
-
G_a(a_+(x), x) N_x(a_+(x), x)
=
\frac{
x^2
\left[
a_+(x)(2x^3+8x^2+13x+8)-x-2
\right]
}{
(a_+(x)-1)(1+x)^2(x+2)^2
}.
\eeqq

Finally, dividing by \( N_a(a_+(x), x) \) gives the desired expression for \( H'(x) \):
\beqq
H'(x)
=
\frac{
x^2\left[
a_+(x)(2x^3+8x^2+13x+8)-x-2
\right]
}{
(a_+(x)-1)(1+x)^2(x+2)^2 N_a(a_+(x), x)
}.
\eeqq

Since $a_+(x)>1$, we have
\begin{align*}
&a_+(x)(2x^3+8x^2+13x+8)-x-2\\
&\qquad>
2x^3+8x^2+13x+8-x-2\\
&\qquad=
2x^3+8x^2+12x+6>0.
\end{align*}
Since $N_a(a_+(x),x)>0$ as well, we conclude that
\beqq
H'(x)>0.
\eeqq for $x>0$.

The quadratic formula gives the following expression for the larger root
of $N(a,x)=0$:
\beqq
a_+(x)
=
\frac{
x^2+6x+6+
\sqrt{(x^2+6x+6)^2-4(x^2+3x+2)(x+2)}
}
{2(x^2+3x+2)}.
\eeqq
It follows that
\beqq
\lim_{x\to0^+}a_+(x)
=
\frac{6+\sqrt{20}}4
=
\frac{3+\sqrt5}{2}
=:a_0
\eeqq
and
\begin{align*}
\lim_{x\to0^+}H(x)
&=
\lim_{x\to0^+}
\left[
a_+(x)x
-
\log
\frac{(1+x)\bigl(a_+(x)(1+x)-1\bigr)}
{a_+(x)-1}
\right]\\
&=0.
\end{align*}
Combining this limit with $H'(x)>0$ for $x>0$, we obtain
\beqq
H(x)>0
\qquad(x>0).
\eeqq
Hence $G(a,x)=0$ and $N(a,x)=0$ have no common solution for
$a>1$ and $x>0$, which implies that  $P$ has no critical point in
$D^\circ$.

We next examine the boundary behavior. For each fixed $a\geq1$,
\beqq
\lim_{x\to0^+}
\frac{1-(1+x)e^{-ax}}{x}
=
a-1.
\eeqq
Hence,
\beqq
\lim_{x\to0^+}P(a,x)
=
B(a):=2a(a-1)e^{1-a}.
\eeqq

For $a>1$, we have $B(a)>0$. Moreover,
\beqq
\frac{B'(a)}{B(a)}
=
\frac1a+\frac1{a-1}-1,
\eeqq
so the critical points of $B$ on $(1,\infty)$ are determined by
\beqq
a^2-3a+1=0.
\eeqq
The unique root larger than $1$ is
\beqq
a_0=\frac{3+\sqrt5}{2}.
\eeqq
In addition,
\beqq
B(1)=0,
\qquad
B(a)>0\quad(a>1) \qquad \text{and} \qquad 
\lim_{a\to+\infty}B(a)=0.
\eeqq
It follows that $B$ attains its maximum on $[1,\infty)$ at
$a_0$. Therefore,
\be\label{eq-2}\max_{a\geq1}B(a)
=
B(a_0)
=
2\left(\frac{1+\sqrt5}{2}\right)^3
e^{-\frac{1+\sqrt5}{2}}
=
\left(4+2\sqrt5\right)e^{-\frac{1+\sqrt5}{2}}.
\ee

We now establish the uniform boundary estimate
\beqq
\limsup_{x\to0^+}\sup_{a\geq1}P(a,x)
\leq
\left(4+2\sqrt5\right)e^{-\frac{1+\sqrt5}{2}}.
\eeqq
 For every $a\geq1$ and $x>0$, we have
\beq\label{eq-3}
1-(1+x)e^{-ax}
\leq
1-e^{-ax}
\leq
ax.
\eeq
The first inequality follows from
$$(1+x)e^{-ax}\geq e^{-ax},$$ and the second from
$$1-e^{-t}\leq t$$ for $t\geq0$.

Hence, for $0<x\leq1$,
\beq\label{eq-4}
P(a,x)
\leq
a^2e^{1-a}(x+2)
\leq
3a^2e^{1-a}.
\eeq

For a fixed $\varepsilon>0$, since
\beqq
\lim_{a\to+\infty}3a^2e^{1-a}=0,
\eeqq
we may select $A>1$ such that, for all $a\geq A$,
\beqq
3a^2e^{1-a}<\varepsilon.
\eeqq
Then \eqref{eq-4} gives
\beq\label{eq-5}
P(a,x)<\varepsilon
\qquad(a\geq A,\ 0<x\leq1).
\eeq

We next show that
\beqq
P(a,x)\longrightarrow B(a)
\qquad(x\to0^+)
\eeqq
uniformly for $a\in[1,A]$. The third derivative of $e^{-ax}$ with respect to $x$ is
$-a^3e^{-ax}$, and it is uniformly bounded for $a\in[1,A]$ and
$0\leq x\leq1$. Therefore, Taylor's formula gives
\beqq
e^{-ax}
=
1-ax+\frac{a^2x^2}{2}+O(x^3)
\qquad(x\to0^+),
\eeqq
uniformly for $a\in[1,A]$. Substituting this expansion, we obtain
\begin{align*}
1-(1+x)e^{-ax}
&=
1-(1+x)
\left(
1-ax+\frac{a^2x^2}{2}+O(x^3)
\right)\\
&=
(a-1)x+
\left(a-\frac{a^2}{2}\right)x^2
+
O(x^3),
\end{align*}
uniformly for $a\in[1,A]$. After dividing by $x$, we find
\beqq
\frac{1-(1+x)e^{-ax}}{x}
=
a-1+
\left(a-\frac{a^2}{2}\right)x
+
O(x^2).
\eeqq
Hence,
\beq\label{eq-6}
\sup_{1\leq a\leq A}
\left|
\frac{1-(1+x)e^{-ax}}{x}-(a-1)
\right|
\longrightarrow0
\qquad(x\to0^+).
\eeq

We shall also need the estimate
\beq\label{eq-7}
ae^{1-a}\leq1
\qquad(a\geq1),
\eeq
because the function
$a\mapsto ae^{1-a}$ is decreasing on $[1,\infty)$.

Using the definitions of $P$ and $B$, we have
\begin{align*}
P(a,x)-B(a)
&=
ae^{1-a}
\left\{
(x+2)\frac{\left[1-(1+x)e^{-ax}\right]}{x}
-2(a-1)
\right\}\\
&=
ae^{1-a}
\left\{
(x+2)
\left[
\frac{1-(1+x)e^{-ax}}{x}-(a-1)
\right]
+x(a-1)
\right\}.
\end{align*}
By \eqref{eq-7}, for $1\leq a\leq A$ and $0<x\leq1$,
\beqq
|P(a,x)-B(a)|
\leq
3
\left|
\frac{1-(1+x)e^{-ax}}{x}-(a-1)
\right|
+x(A-1).
\eeqq
Taking the supremum over $a\in[1,A]$ and applying \eqref{eq-6}, we
obtain
\beq\label{eq-8}
\sup_{1\leq a\leq A}
|P(a,x)-B(a)|
\longrightarrow0
\qquad(x\to0^+).
\eeq

Therefore, \eqref{eq-8} allows us to choose $0<\delta\leq1$
such that
\beqq
|P(a,x)-B(a)|<\varepsilon
\qquad
(1\leq a\leq A,\ 0<x\leq\delta).
\eeqq
Combining this with \eqref{eq-2}, we get
\beqq
P(a,x)
<
B(a)+\varepsilon
\leq
(4+2\sqrt5)e^{-\frac{1+\sqrt5}{2}}
+\varepsilon
\eeqq
for $1\leq a\leq A$ and $0<x\leq\delta$.

On the other hand, if $a\geq A$ and $0<x\leq\delta$, then
\eqref{eq-5} gives
$$P(a,x)<\varepsilon.$$ Combining these two cases, we obtain
\beqq
\sup_{a\geq1}P(a,x)
\leq
(4+2\sqrt5)e^{-\frac{1+\sqrt5}{2}}
+\varepsilon
\qquad(0<x\leq\delta).
\eeqq
Since this holds for every $\varepsilon>0$, we conclude that
\beqq
\limsup_{x\to0^+}\sup_{a\geq1}P(a,x)
\leq
(4+2\sqrt5)e^{-\frac{1+\sqrt5}{2}}.
\eeqq

Next, combining \eqref{eq-7} with
$
1-(1+x)e^{-ax}<1,
$
we have
\beqq
P(a,x)
\leq
\frac{x+2}{x}
=
1+\frac2x.
\eeqq
Therefore,
\beqq
\limsup_{x\to+\infty}
\sup_{a\geq1}P(a,x)
\leq1<B(2)\leq(4+2\sqrt5)e^{-\frac{1+\sqrt5}{2}}.
\eeqq

Finally, \eqref{eq-3} yields
\begin{align*}
\frac{(x+2)}{x}
\left[1-(1+x)e^{-ax}\right]
&=
1-(1+x)e^{-ax}+
\frac2x
\left[1-(1+x)e^{-ax}\right]\\
&\leq
1+2a.
\end{align*}
Hence,
\beq\label{eq-9}
P(a,x)
\leq
ae^{1-a}(1+2a)
\qquad(x>0).
\eeq
Taking the supremum over $x>0$ in \eqref{eq-9} gives
\beqq
0
\leq
\sup_{x>0}P(a,x)
\leq
ae^{1-a}(1+2a).
\eeqq
Since
\beqq
\lim_{a\to+\infty}
ae^{1-a}(1+2a)
=
0,
\eeqq
we conclude that
\beqq
\lim_{a\to+\infty}
\sup_{x>0}P(a,x)
=
0.
\eeqq

We are now ready to prove the upper bound. Let \(\varepsilon>0\).
The preceding boundary estimates
allow us to choose \(0<\delta<R\) and \(A>1\) such that
\beq\label{eq-100}
P(a,x)
\leq
(4+2\sqrt5)e^{-\frac{1+\sqrt5}{2}}+\varepsilon
\eeq
for every
\beqq
(a,x)\in
\Omega_{1}
\cup
\Omega_{2}
\cup
\Omega_{3},
\eeqq
where $\Omega_{1}=[1,\infty)\times(0,\delta]$, $\Omega_{2}=[1,\infty)\times[R,\infty)$ and $\Omega_{3}=[A,\infty)\times(0,\infty)$.

Consider the compact rectangle
$
\mathcal{K}=[1,A]\times[\delta,R].
$
Assume, for contradiction, that
\beqq
P(a,x)
>
\left(4+2\sqrt5\right)e^{-\frac{1+\sqrt5}{2}}
+\varepsilon
\eeqq
at some point of $\mathcal{K}$. Because $P$ is continuous on $\mathcal{K}$, it attains
its maximum at some point $(a_*,x_*)\in \mathcal{K}$, and
\beqq
P(a_*,x_*)
>
\left(4+2\sqrt5\right)e^{-\frac{1+\sqrt5}{2}}
+\varepsilon.
\eeqq
We will show that $(a_*,x_*)\notin \partial \mathcal{K}$.
By \eqref{eq-100}, we have
\beqq
(a_*,x_*)
\notin
\Omega_{4}
\cup
\Omega_{5}
\cup
\Omega_{6},
\eeqq where $\Omega_{4}=[1,A]\times\{\delta\}$, $\Omega_{5}=[1,A]\times\{R\}$ and $\Omega_{6}=\{A\}\times[\delta,R]$.
It remains to show that
\beqq
(a_*,x_*)\notin\{1\}\times[\delta,R].
\eeqq
Indeed, for each fixed $x>0$, the formula
\beqq
\ell_a(a,x)
=
\frac1a-1+
\frac{x(1+x)e^{-ax}}
{1-(1+x)e^{-ax}}
\eeqq
extends continuously to $a=1$. Since both sides of \eqref{eq-0} also extend continuously to the boundary, and
\beqq
\ell_a(1,x)
=
\frac{x(1+x)e^{-x}}
{1-(1+x)e^{-x}}
>0,
\eeqq
the function $a\mapsto P(a,x)$ is strictly increasing in a right-hand
neighborhood of $a=1$. Hence the maximum of $P$ on
$\mathcal{K}$ cannot occur on the side $\{1\}\times[\delta,R]$.

Therefore,
\beqq
(a_*,x_*)\in\operatorname{int}\mathcal{K}
=
(1,A)\times(\delta,R)
\subset D^\circ.
\eeqq
Since $(a_*,x_*)$ is an interior maximum point of $P$ and $P$ is
smooth in $D^\circ$, Fermat's theorem gives
\beqq
P_a(a_*,x_*)=P_x(a_*,x_*)=0.
\eeqq
Thus $(a_*,x_*)$ is a critical point of $P$ in $D^\circ$,
contradicting the fact proved above that $P$ has no critical point in
$D^\circ$. Therefore,
\beqq
P(a,x)
\leq
(4+2\sqrt5)e^{-\frac{\left(1+\sqrt5\right)}{2}}
+\varepsilon
\qquad(a\geq1,\ x>0).
\eeqq
Since the preceding inequality holds for every $\varepsilon>0$, we obtain
\beqq
P(a,x)
\leq
(4+2\sqrt5)e^{-\frac{\left(1+\sqrt5\right)}{2}}
\qquad(a\geq1,\ x>0).
\eeqq
Taking the supremum over the whole domain, we conclude that
\beqq
\sup_{a\geq1,\ x>0}P(a,x)
\leq
\left(4+2\sqrt5\right)e^{-\frac{\left(1+\sqrt5\right)}{2}}.
\eeqq

To prove the reverse inequality, choose
\beqq
a=a_0=\frac{3+\sqrt5}{2}.
\eeqq
For this choice,
\beqq
\lim_{x\to0^+}
\frac{1-(1+x)e^{-a_0x}}{x}
=
a_0-1.
\eeqq
Therefore,
$$\lim_{x\to0^+}P(a_0,x)=
2a_0(a_0-1)e^{1-a_0}=
\left(4+2\sqrt5\right)e^{-\frac{\left(1+\sqrt5\right)}{2}}.
$$
It follows that
\beqq
\sup_{a\geq1,\ x>0}P(a,x)
\geq
\left(4+2\sqrt5\right)e^{-\frac{\left(1+\sqrt5\right)}{2}}.
\eeqq
Combining the upper and lower bounds, we obtain
\beqq
\sup_{a\geq1,\ x>0}P(a,x)
=
\left(4+2\sqrt5\right)e^{-\frac{\left(1+\sqrt5\right)}{2}}.
\eeqq
The proof of this lemma is complete.
\end{proof}



Now we come to prove Theorem \ref{CL-lem-2}.
 Since $\mathbb{D}$ is a simply connected domain, it follows that $f$ admits the canonical decomposition
$$
f = h + \overline{g},
$$
where $h$ and $g$ are analytic in $\mathbb{D}$ with $g(0)=0$. If $\|f\|_{\mathscr{B}_{s}}=0$, then (\ref{eq-cl-10})
holds. Without loss of generality, we assume that  $\|f\|_{\mathscr{B}_{s}}=1$ and $\mathcal{B}_{f}(z_{1})\leq\mathcal{B}_{f}(z_{2}).$
For $z\in\mathbb{D}$, let 

$$\phi(z)=\frac{z_{2}-z}{1-\overline{z_{2}}z},~~w=\phi^{-1}(z_{1})\,\,\,~~~\mbox{and}\,\,\,~~~F(z)=\mathbf{H}(z)+\overline{\mathbf{G}(z)},$$
where $\mathbf{H}=h\circ\phi$ and $\mathbf{G}=g\circ\phi$. Then
$$\|F\|_{\mathscr{B}_{s}}=\|f\|_{\mathscr{B}_{s}}=1$$ and 
\be\label{eq-po-1}\rho(z_{1},z_{2})=\rho\left(\phi^{-1}(z_{1}),\phi^{-1}(z_{2})\right)=\rho(w,0)=|w|.\ee
Since $$\phi(0)=z_{2}~~\mbox{and}~~|\phi'(w)|=\frac{1-|\phi(w)|^{2}}{1-|w|^{2}}=\frac{1-|z_{1}|^{2}}{1-|w|^{2}},$$
we see that 
\be\label{eq-cj-11}
\mathcal{B}_{F}(0)=\mathcal{B}_{f}(\phi(0))=\mathcal{B}_{f}(z_{2})
\ee
and 
\be\label{eq-cj-12}
\mathcal{B}_{F}(w)=\left(1-|w|^{2}\right)\Lambda_{f}(\phi(w))|\phi'(w)|=\mathcal{B}_{f}(z_{1}).
\ee

Since $F$ is a locally univalent harmonic function on $\mathbb{D}$, we have
$\mathcal{B}_{F}(0)>0$.

For $z\in\mathbb{D}$, let $$F_{\theta_{0}}(z)=\mathbf{H}(z)+e^{i\theta_{0}}\mathbf{G}(z),$$
where $\theta_{0}\in[0,2\pi]$ is a real number such that $|F_{\theta_{0}}'(0)|=\mathcal{B}_{F}(0)$.
Then $F_{\theta_{0}}$ is a locally univalent analytic function on $\mathbb{D}$,
since $F$ is a locally univalent harmonic function on $\mathbb{D}$.
Also, $\|F_{\theta_{0}}\|_{\mathscr{B}_{s}}\leq1$.
Let $F_{\theta_{0}}'(0)=\alpha e^{i\eta}$, where $\alpha=\mathcal{B}_{F}(0)$.
An application of Lemma \ref{CL-lem-1} to $e^{-i\eta}F_{\theta_{0}}(z)$ gives that, for all $z\in\mathbb{D}$, 

\be\label{eq-cj-13}
\mathcal{B}_{F}(z)\geq \mathcal{B}_{F_{\theta_{0}}}(z)\geq(1+m(\alpha))\frac{1+|z|}{1-|z|}e^{\left[1-(1+m(\alpha))\frac{1+|z|}{1-|z|}\right]},
\ee
where $m(\alpha)$ is the same as in (\ref{eq-CJ-12}).
It follows from (\ref{eq-CJ-12}), (\ref{eq-cj-11}), (\ref{eq-cj-12}) and (\ref{eq-cj-13}) that 
\beq\label{eq-cj-144}
\nonumber \mathcal{B}_{f}(z_{2})-\mathcal{B}_{f}(z_{1})&=&\mathcal{B}_{F}(0)-\mathcal{B}_{F}(w)\\
&\leq&\alpha-(1+m(\alpha))\frac{1+|w|}{1-|w|}e^{\left[1-(1+m(\alpha))\frac{1+|w|}{1-|w|}\right]}\nonumber\\
&=&\alpha\left[1-\frac{1+|w|}{1-|w|}e^{-\frac{2(1+m(\alpha))|w|}{1-|w|}}\right].
\eeq
If $w=0$, then $z_1=z_2$, and \eqref{eq-cl-10} holds trivially.
Hence, in what follows, we assume that
$
0<|w|<1.
$

Now set
\beqq
a=1+m(\alpha) \quad \text{and} \quad x=\frac{2|w|}{1-|w|}.
\eeqq
Since $m(\alpha)\geq0$ and $0<|w|<1$, we have
\beqq
a\geq1,\qquad \text{and} \qquad x>0.
\eeqq
Moreover,
\[
\alpha=ae^{1-a},\qquad
\frac{1+|w|}{1-|w|}=1+x,\qquad
\frac{2(1+m(\alpha))|w|}{1-|w|}=ax
\qquad\text{and} \qquad
|w|=\frac{x}{x+2}.
\]
Hence, 
\beqq
\alpha\left[
1-\frac{1+|w|}{1-|w|}
e^{-\frac{2(1+m(\alpha))|w|}{1-|w|}}
\right]
&=&ae^{1-a}\left[1-(1+x)e^{-ax}\right]\\
&=&|w|\,ae^{1-a}\frac{x+2}{x}
\left[1-(1+x)e^{-ax}\right]\\
&=&|w|P(a,x)\\
&\leq&C_0|w|,
\eeqq
where the last inequality follows from Lemma \ref{lem1}.  Together with \eqref{eq-po-1} and \eqref{eq-cj-144}, yields that
\beqq
\mathcal{B}_{f}(z_{2})-\mathcal{B}_{f}(z_{1})\le C_0 \rho(z_{1},z_{2}).
\eeqq

It remains to prove the sharpness of the constant $C_0$. Set
\[
m_0=\frac{1+\sqrt5}{2},
\qquad
\alpha_0=(1+m_0)e^{-m_0},
\]
and define
\[
f_*(z)
=
\int_0^z
\frac{\alpha_0}{(1-\xi)^2}
e^{\left[
-\frac{2(1+m_0)\xi}{1-\xi}
\right]}
\,d\xi,
\qquad z\in\mathbb D.
\]
Then
\[
f_*'(z)
=
\frac{\alpha_0}{(1-z)^2}
e^{\left[
-\frac{2(1+m_0)z}{1-z}
\right]}.
\]
Since $f_*'(z)\neq0$ for every $z\in\mathbb D$, the function $f_*$
is locally univalent in $\mathbb D$.

We regard $f_*$ as a harmonic mapping whose co-analytic part is
identically zero. Thus,
\[
\Lambda_{f_*}(z)=|f_*'(z)|
\]
and
\[
\mathcal B_{f_*}(z)
=
(1-|z|^2)|f_*'(z)|.
\]
We next show that
\[
\|f_*\|_{\mathscr B_s}=1.
\]

For every $z\in\mathbb D$, using
\[
\alpha_0=(1+m_0)e^{-m_0}
\]
and the identity
\[
2\operatorname{Re}\left(\frac{z}{1-z}\right)
=
\frac{1-|z|^2}{|1-z|^2}-1,
\]
we obtain
\[
\begin{aligned}
\mathcal B_{f_*}(z)
&=
(1-|z|^2)|f_*'(z)|\\
&=
\frac{(1-|z|^2)}{|1-z|^2}
\alpha_0
e^{\left[
-2(1+m_0)\operatorname{Re}\left(\frac{z}{1-z}\right)
\right]}\\
&=
\frac{(1-|z|^2)}{|1-z|^2}
(1+m_0)e^{-m_0}
e^{\left[
-(1+m_0)
\left(
\frac{1-|z|^2}{|1-z|^2}-1
\right)
\right]}\\
&=
e(1+m_0)
\frac{(1-|z|^2)}{|1-z|^2}
e^{\left[
-(1+m_0)\frac{(1-|z|^2)}{|1-z|^2}
\right]}\\
&\leq1.
\end{aligned}
\]
The final inequality follows from
$
te^{1-t}\le1
$
for any $t>0$ with equality only for $t=1$.
Consequently,
\[
\|f_*\|_{\mathscr B_s}
=
\sup_{z\in\mathbb D}\mathcal B_{f_*}(z)
\leq1.
\]

On the other hand, as $z$ varies over $(-1,1)$, the quantity
\[
\frac{1-|z|^2}{|1-z|^2}
=
\frac{1+z}{1-z}
\]
ranges over $(0,\infty)$. Hence, we may choose $z\in(-1,1)$ such
that
\[
(1+m_0)\frac{(1-|z|^2)}{|1-z|^2}=1.
\]
For this choice of $z$, equality holds in the preceding estimate.
Therefore,
\[
\|f_*\|_{\mathscr B_s}=1.
\]

Now take
\[
z_1=0,\qquad z_2=r,
\qquad 0<r<1.
\]
Then
\[
\rho(z_1,z_2)=\rho(0,r)=r,
\]
and
\[
\mathcal B_{f_*}(0)
=
|f_*'(0)|
=
\alpha_0.
\]
Moreover,
\[
\mathcal B_{f_*}(r)
=
(1-r^2)|f_*'(r)|
=
\alpha_0\left(\frac{1+r}{1-r}\right)
e^{\left[
-\frac{2(1+m_0)r}{1-r}
\right]}.
\]
Since $m_0>0$, for all sufficiently small $r>0$,
\[
\left(\frac{1+r}{1-r}\right)
e^{\left[
-\frac{2(1+m_0)r}{1-r}
\right]}
<1.
\]
Thus,
\[
\mathcal B_{f_*}(0)>\mathcal B_{f_*}(r)
\]
for all sufficiently small $r>0$, and hence
\[
\left|
\mathcal B_{f_*}(0)-\mathcal B_{f_*}(r)
\right|
=
\mathcal B_{f_*}(0)-\mathcal B_{f_*}(r).
\]
Therefore,
\[
\begin{aligned}
\frac{
\left|
\mathcal B_{f_*}(0)-\mathcal B_{f_*}(r)
\right|
}{
\|f_*\|_{\mathscr B_s}\rho(0,r)
}
&=
\frac{\alpha_0}{r}
\left\{
1-
\left(\frac{1+r}{1-r}\right)
e^{\left[
-\frac{2(1+m_0)r}{1-r}
\right]}
\right\}.
\end{aligned}
\]
As $r\to0^+$,
\[
\left(\frac{1+r}{1-r}\right)
e^{\left[
-\frac{2(1+m_0)r}{1-r}
\right]}
=
1-2m_0r+O(r^2).
\]
It follows that
\[
\begin{aligned}
\lim_{r\to0^+}
\frac{
\left|
\mathcal B_{f_*}(0)-\mathcal B_{f_*}(r)
\right|
}{
\|f_*\|_{\mathscr B_s}\rho(0,r)
}
&=
2m_0\alpha_0\\
&=
2m_0(1+m_0)e^{-m_0}\\
&=
(4+2\sqrt5)
e^{-\frac{1+\sqrt5}{2}}\\
&=
C_0.
\end{aligned}
\]
Therefore, no constant smaller than $C_0$ can satisfy
\eqref{eq-cl-10} for all locally univalent mappings in
$\mathscr B_h$. Hence, the constant $C_0$ is sharp.
The proof of this theorem is complete.
\qed



\subsection*{Proof of Theorem \ref{thm-1}} In preparation for the proof of this theorem, we first prove some required lemmas.

\begin{lem}\label{lem-2.3}
Let $f$ be a harmonic $(K,K_{0})$-quasiregular mapping in $\mathbb{D}$. Then 
\begin{enumerate}
\item
$J_f(z)\geq 0$ in $\mathbb{D}$;
\item
$\Lambda_f^2(z) \le K J_f(z) + K_0$  in $\mathbb{D}$;
\item
$f\in\mathscr{B}_h$
 if and only if $f\in\mathscr{B}_h^*$. Moreover, we have
\beqq
    \|f\|_{\mathscr{B}_{h^*,s}}
    \leq
    \|f\|_{\mathscr{B}_{s}}
    \leq
    \sqrt{K}\,\|f\|_{\mathscr{B}_{h^*,s}}+ \sqrt{K_{0}}.
\eeqq
\end{enumerate}
\end{lem}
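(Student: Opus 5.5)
The plan is to pass from the almost-everywhere information in the definition to pointwise statements by continuity, and then read off the Bloch-norm comparison directly from the pointwise inequalities.

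\emph{Parts (1) and (2).} Since $f$ is harmonic in $\mathbb{D}$, it is real-analytic. Hence $f_z$ and $f_{\bar z}$ are continuous, and so are
\[
J_f=|f_z|^2-|f_{\bar z}|^2
\qquad\text{and}\qquad
\Lambda_f=|f_z|+|f_{\bar z}|.
\]
By condition (1) of the definition, $J_f>0$ almost everywhere. A set of full measure is dense, so for every $z\in\mathbb{D}$ we can pick points $z_n\to z$ with $J_f(z_n)>0$. Continuity then gives $J_f(z)\ge 0$.

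The same density argument applies to the continuous function $KJ_f+K_0-\Lambda_f^2$. It is nonnegative almost everywhere by condition (2), hence nonnegative everywhere. This proves (2).

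\emph{Part (3), the inequality $\|f\|_{\mathscr{B}_{h^*,s}}\le\|f\|_{\mathscr{B}_s}$.} By (1), $|J_f|=J_f$. We then use the pointwise estimate
\[
J_f=(|f_z|-|f_{\bar z}|)(|f_z|+|f_{\bar z}|)\le \Lambda_f^2 .
\]
This is valid because $J_f\ge0$ forces $|f_z|\ge|f_{\bar z}|$, so $0\le |f_z|-|f_{\bar z}|\le \Lambda_f$. Thus $\sqrt{|J_f|}\le\Lambda_f$. Multiplying by $1-|z|^2$ gives $\mathcal{B}^*_f\le\mathcal{B}_f$, and taking suprema yields the claimed inequality.

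\emph{Part (3), the inequality $\|f\|_{\mathscr{B}_s}\le\sqrt K\,\|f\|_{\mathscr{B}_{h^*,s}}+\sqrt{K_0}$.} From (2) and $\sqrt{s+t}\le\sqrt s+\sqrt t$ we get
\[
\Lambda_f\le \sqrt{KJ_f}+\sqrt{K_0}.
\]
Multiplying by $1-|z|^2\le 1$ gives
\[
\mathcal{B}_f(z)\le \sqrt K\,\mathcal{B}^*_f(z)+\sqrt{K_0}(1-|z|^2)\le \sqrt K\,\|f\|_{\mathscr{B}_{h^*,s}}+\sqrt{K_0}.
\]
Taking the supremum over $z\in\mathbb{D}$ gives the right-hand inequality.

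\emph{Part (3), the equivalence.} The two inequalities together show that one seminorm is finite exactly when the other is. The value $|f(0)|$ is finite in either case, so $f\in\mathscr{B}_h$ if and only if $f\in\mathscr{B}_h^*$.

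The only mildly delicate point is the upgrade from almost everywhere to everywhere. This is handled entirely by the continuity of the derivatives of a harmonic map, so I expect no real obstacle.
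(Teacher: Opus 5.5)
Your proof is correct and follows essentially the same route as the paper: you upgrade the almost-everywhere conditions to pointwise ones via continuity of $J_f$ and $\Lambda_f$, where your density argument is equivalent to the paper's positive-measure-disk contradiction. You then use the pointwise bounds $\sqrt{J_f}\le\Lambda_f$ and $\Lambda_f\le\sqrt{K}\sqrt{J_f}+\sqrt{K_0}$ and take suprema, and you also write out part (1) and the justification of $\sqrt{J_f}\le\Lambda_f$, which the paper leaves implicit.
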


\begin{proof}
First, we will prove (2). The proof for (1) is similar.
Since $f$ is a harmonic $(K,K_0)$-quasiregular mapping, the functions $\Lambda_f$ and $J_f$ are continuous on $\mathbb{D}$, and the inequality
\beqq
    \Lambda_f^2 \le K J_f + K_0
\eeqq
holds almost everywhere in $\mathbb{D}$; in fact, it holds everywhere in $\mathbb{D}$. Indeed, if it failed at some point, the continuity of the function
\[
\Lambda_f^2 - K J_f - K_0
\]
would imply the existence of a disk of positive measure on which the inequality is violated, a contradiction.

Next, we prove (3).
Suppose $f\in\mathscr{B}_h$. Since $\sqrt{J_f(z)}\leq \Lambda_f(z)$, it immediately follows that
\beqq
    \sup_{z\in\mathbb{D}}(1-|z|^2)\sqrt{J_f(z)}
    \leq
    \sup_{z\in\mathbb{D}}(1-|z|^2)\Lambda_f(z)
    =
    \|f\|_{\mathscr{B}_{s}}.
\eeqq
This implies that $f\in\mathscr{B}_h^*$ and 
$\|f\|_{\mathscr{B}_{h^*,s}}\leq \|f\|_{\mathscr{B}_{s}}$.

Conversely, suppose $f\in\mathscr{B}_h^*$. 

Consequently,
\beqq
    \Lambda_f(\xi)
    \le
    \sqrt{K}\,\sqrt{J_f(\xi)} + \sqrt{K_0}, \qquad \xi \in \mathbb{D}.
\eeqq
Multiplying both sides by $1-|\xi|^2$ and taking the supremum over $\xi\in\mathbb{D}$, we obtain
\beqq
    \|f\|_{\mathscr{B}_{s}}
    \le
    \sqrt{K}\,\|f\|_{\mathscr{B}_{h^*,s}} + \sqrt{K_0}.
\eeqq
Thus the proof of the lemma is complete.
\end{proof}

\blem\label{lem-2.2}
Let \(f=h+\overline g\) be a harmonic \((K,K_{0})\)-quasiregular mapping in
\(\mathbb D\), where \(K\geq1\) and \(K_{0}\geq0\). If
\(f\in \mathscr{B}_h^*\), then
\beqq
\|g\|_{\mathscr{B}_{s}}
\leq
\|f\|_{\mathscr{B}_{h^*,s}}  \frac{\sqrt{K - 1}}{2}
+ \frac{\sqrt{K_0}}{2}
\eeqq
and
\beqq
\|h\|_{\mathscr{B}_{s}}
\leq
\|f\|_{\mathscr{B}_{h^*,s}}  \frac{\sqrt{K +3}}{2}
+ \frac{\sqrt{K_0}}{2}.
\eeqq
\elem

\begin{proof}
Set \(M = \|f\|_{\mathscr{B}_{h^*,s}}\). For \(z \in \mathbb D\), define
\beqq
a(z) = (1 - |z|^2)|h'(z)|
\qquad\text{and}\qquad
b(z) = (1 - |z|^2)|g'(z)|.
\eeqq
Since \(J_f(z) = |h'(z)|^2 - |g'(z)|^2\), we have
\beqq
(1 - |z|^2)^2 J_f(z) = a(z)^2 - b(z)^2.
\eeqq

By the assumption that \(f\) is a harmonic \((K, K_0)\)-quasiregular mapping, we obtain
\beqq
\Lambda_f^2(z) \leq K J_f(z) + K_0,
\qquad z \in \mathbb D.
\eeqq
Multiplying this inequality by \((1 - |z|^2)^2\), we get
\beqq
(a(z) + b(z))^2
\leq K\bigl(a(z)^2 - b(z)^2\bigr) + K_0 (1 - |z|^2)^2 
\leq K\bigl(a(z)^2 - b(z)^2\bigr) + K_0.
\eeqq
Equivalently,
\beqq
(K + 1) b(z)^2 + 2a(z)b(z)
\leq (K - 1) a(z)^2 + K_0.
\eeqq
Since $J_f(z)\geq 0$ in $\mathbb{D}$, by Lemma \ref{lem-2.3},
we obtain $$2a(z)b(z) \geq 2b(z)^2,$$ which combined with the above inequality implies that
\be\label{2.1}
(K +3) b(z)^2
\leq (K - 1) a(z)^2 + K_0.
\ee

On the other hand, by the definition of \(M\), we have
\beqq
(1 - |z|^2) \sqrt{J_f(z)} \leq M,
\eeqq
which implies that
\beqq
a(z)^2 - b(z)^2
= (1 - |z|^2)^2 J_f(z)
\leq M^2.
\eeqq
Consequently,
\begin{align}\label{eq-2.19}
a(z)^2 &\leq M^2 + b(z)^2.
\end{align}
Combining \eqref{2.1} and \eqref{eq-2.19} yields
\beqq
(K + 3) b(z)^2
\leq (K - 1)\bigl(M^2 + b(z)^2\bigr) + K_0.
\eeqq
Thus,
\beq\label{cq-chj1}
4 b(z)^2 \leq (K - 1) M^2 + K_0,
\eeq
and consequently,
\beqq
b(z)
\leq \frac{\sqrt{(K - 1) M^2 + K_0}}{2}
\leq M \frac{\sqrt{K - 1}}{2}
+ \frac{\sqrt{K_0}}{2}.
\eeqq
Taking the supremum over \(z \in \mathbb D\), we obtain
\beqq
\|g\|_{\mathscr{B}_{s}}=\sup_{z \in \mathbb D} b(z)
= \sup_{z \in \mathbb D} \mathcal{B}_{g}(z)
\leq M \frac{\sqrt{K - 1}}{2}
+ \frac{\sqrt{K_0}}{2}.
\eeqq

On the other hand, by \eqref{eq-2.19} and \eqref{cq-chj1}, we obtain
\beqq
4a(z)^2 \leq 4 M^2 + 4b(z)^2 \leq 4M^2+(K - 1) M^2 + K_0 =(K+3)M^2+K_0,
\eeqq
which implies that
\beqq
a(z)
\leq \frac{\sqrt{(K +3) M^2 + K_0}}{2}
\leq M \frac{\sqrt{K +3}}{2}
+ \frac{\sqrt{K_0}}{2}.
\eeqq
Therefore,
\beqq
\|h\|_{\mathscr{B}_{s}}
= \sup_{z \in \mathbb D} a(z)
= \sup_{z \in \mathbb D} \mathcal{B}_{h}(z)
\leq M \frac{\sqrt{K +3}}{2}
+ \frac{\sqrt{K_0}}{2}.
\eeqq
The proof of this lemma is complete.
\end{proof}



We now proceed to prove Theorem \ref{thm-1}.
Fix \(z,w\in\mathbb D\). We may assume that \(z\neq w\). Let
\beqq
\varphi_w(\xi)=\frac{w-\xi}{1-\overline w\,\xi},
\qquad \text{and} \qquad
\zeta=\varphi_w(z).
\eeqq
Then
\beqq
|\zeta|=\rho(z,w).
\eeqq
Set
\beqq
\psi=f\circ\varphi_w=H+\overline G,
\qquad
H=h\circ\varphi_w,\quad \text{and} \quad G=g\circ\varphi_w .
\eeqq
Since $\varphi_w$ is a conformal automorphism of $\mathbb D$, the mapping
$\psi$ is also harmonic, locally univalent and sense-preserving in
$\mathbb D$.
By the chain rule,
\beqq
J_\psi(\xi)=J_f(\varphi_w(\xi))|\varphi_w'(\xi)|^2.
\eeqq
Moreover,
\beqq
1-|\varphi_w(\xi)|^2=(1-|\xi|^2)|\varphi_w'(\xi)|.
\eeqq
It follows that
\beqq
(1-|\xi|^2)\sqrt{J_\psi(\xi)}
=
(1-|\varphi_w(\xi)|^2)\sqrt{J_f(\varphi_w(\xi))}.
\eeqq
Taking \(\xi=\zeta\) and \(\xi=0\), respectively, we obtain
\beqq
(1-|\zeta|^2)\sqrt{J_\psi(\zeta)}
=
(1-|z|^2)\sqrt{J_f(z)}
\eeqq
and
\beqq
\sqrt{J_\psi(0)}
=
(1-|w|^2)\sqrt{J_f(w)}.
\eeqq
Thus it remains to estimate
\beqq
|U(\zeta)-U(0)|,
\eeqq
where
\beqq
U(\xi)=(1-|\xi|^2)\sqrt{J_\psi(\xi)}.
\eeqq

For \(\xi\in\mathbb D\), set
\beqq
a(\xi)=(1-|\xi|^2)|H'(\xi)|
\qquad \text{and} \qquad
b(\xi)=(1-|\xi|^2)|G'(\xi)|.
\eeqq
Since
\beqq
J_\psi(\xi)=|H'(\xi)|^2-|G'(\xi)|^2,
\eeqq
we have
\beqq
U(\xi)^2=a(\xi)^2-b(\xi)^2.
\eeqq

Since \(\psi\) is sense-preserving and locally univalent, we have
\beqq
J_\psi(\xi)=|H'(\xi)|^2-|G'(\xi)|^2>0,
\qquad \xi\in\mathbb D.
\eeqq
Hence \(H'(\xi)\neq0\) for every \(\xi\in\mathbb D\), which implies that $H$ is also locally univalent in $\mathbb{D}$.  Therefore,
\beqq
\omega(\xi)=\frac{G'(\xi)}{H'(\xi)}
\eeqq
is analytic in \(\mathbb D\) and satisfies
\[
|\omega(\xi)|<1,
\qquad \xi\in\mathbb D.
\]
For $\xi\in\mathbb D$,
let
\beqq
q(\xi)=\sqrt{1-|\omega(\xi)|^2}.
\eeqq
Since
\beqq
U(\xi)
=
(1-|\xi|^2)\sqrt{|H'(\xi)|^2-|G'(\xi)|^2},
\eeqq
and
\beqq
|G'(\xi)|=|\omega(\xi)|\,|H'(\xi)|,
\eeqq
we get
\beqq
U(\xi)
=
(1-|\xi|^2)|H'(\xi)|
\sqrt{1-|\omega(\xi)|^2}
=
a(\xi)q(\xi).
\eeqq
Let
\beqq
U(\xi)=a(\xi)-E(\xi)
~~~\mbox{and}~~~
E(\xi)=a(\xi)(1-q(\xi)).
\eeqq
Since
\beqq
1-q(\xi)
=
\frac{(1-q(\xi))(1+q(\xi))}{1+q(\xi)}
=
\frac{1-q(\xi)^2}{1+q(\xi)}
=
\frac{|\omega(\xi)|^2}{1+q(\xi)}
\eeqq
and
\beqq
b(\xi)=a(\xi)|\omega(\xi)|,
\eeqq
we obtain
\beqq
E(\xi)
=
a(\xi)(1-q(\xi))
=
a(\xi)\frac{|\omega(\xi)|^2}{1+q(\xi)}
=
b(\xi)\frac{|\omega(\xi)|}{1+q(\xi)}.
\eeqq
Thus
\beqq
E(\xi)=b(\xi)s(\xi)~~~\,\,
\mbox{and}~~~\,\,
s(\xi)=\frac{|\omega(\xi)|}{1+q(\xi)}.
\eeqq
Clearly,
\beqq
0\leq s(\xi)<1,
\qquad \xi\in\mathbb D.
\eeqq
Therefore,
\beq\label{3.1}
|U(\zeta)-U(0)|
\leq
|a(\zeta)-a(0)|+|E(\zeta)-E(0)|.
\eeq

We now estimate the two terms in \eqref{3.1} separately. 
First, since \(f\) is a harmonic \((K,K_{0})\)-quasiregular mapping and
\(f\in \mathscr{B}_h^*\), Lemma 2.1 implies that \(f\in \mathscr{B}_h\). Hence
\(h,g\in \mathscr{B}\). Moreover, by the M\"obius invariance of the
analytic Bloch semi-norm, we have
\beqq
\|H\|_{\mathscr{B}_s}
=
\|h\circ\varphi_w\|_{\mathscr{B}_s}
=
\|h\|_{\mathscr{B}_s}
\eeqq
and similarly $G\in \mathscr{B}$.

Applying Theorem \ref{CL-lem-2} to the analytic function \(H\), we obtain
\beq\label{chj0}
|a(\zeta)-a(0)|
=
\left|
(1-|\zeta|^2)|H'(\zeta)|-|H'(0)|
\right|
\leq
 C_{0}|\zeta|\,\|H\|_{\mathscr{B}_s}=
 C_{0}|\zeta| \|h\|_{\mathscr{B}_s},
\eeq
 where $C_{0}$ is the same as in Theorem \ref{CL-lem-2}.
Furthermore, it follows from Lemma \ref{lem-2.2} that
\beqq
\|h\|_{\mathscr{B}_s}
\leq
\frac{\sqrt{K+3}}{2}\|f\|_{\mathscr{B}_{h^*,s}}+\frac{\sqrt{K_{0}}}{2}.
\eeqq
Consequently,
\beq\label{3.2}
|a(\zeta)-a(0)|
\leq
C_{0}|\zeta|
\left(
\frac{\sqrt{K+3}}{2}\|f\|_{\mathscr{B}_{h^*,s}}+\frac{\sqrt{K_{0}}}{2}
\right).
\eeq

Next we estimate the second term in \eqref{3.1}. It follows from
$E(\xi)=b(\xi)s(\xi)$ that
\begin{align*}
|E(\zeta)-E(0)|=|b(\zeta)s(\zeta)-b(0)s(0)|\le
|b(\zeta)-b(0)|s(\zeta)
+
b(0)|s(\zeta)-s(0)|.
\end{align*}
Since \(0\leq s(\xi)<1\) for \(\xi\in\mathbb D\), we see that
\be\label{3.t3}
|E(\zeta)-E(0)|
\leq
|b(\zeta)-b(0)|
+
b(0)|s(\zeta)-s(0)|.
\ee
 By applying \eqref{eq-cj-3}  to $G$, we get
\beq\label{3.3}
|b(\zeta)-b(0)|
=
\left|
(1-|\zeta|^2)|G'(\zeta)|-|G'(0)|
\right|
\leq
\frac{3\sqrt{3}}{2}|\zeta|\,\|G\|_{\mathscr{B}_{s}}.
\eeq
It remains to estimate \(|s(\zeta)-s(0)|\).

Let
\beqq
S(t)=\frac{t}{1+\sqrt{1-t^2}},
\qquad
0\leq t<1.
\eeqq
Then
\beqq
s(\xi)=S(|\omega(\xi)|)
\eeqq
and
\beq\label{3}
S'(t)
=
\frac{1}
{\sqrt{1-t^2}\bigl(1+\sqrt{1-t^2}\bigr)}.
\eeq

Since the function $r\mapsto |\omega(r\zeta)|$ is absolutely continuous on $[0,1]$, we see that, for almost every \(r\in[0,1]\), the chain rule gives
\beqq
\frac{d}{dr}s(r\zeta)
=
S'(|\omega(r\zeta)|)
\frac{d}{dr}|\omega(r\zeta)|.
\eeqq
Consequently,
\beq\label{3.4}
\left|
\frac{d}{dr}s(r\zeta)
\right|
\leq
S'(|\omega(r\zeta)|)
\left|
\frac{d}{dr}|\omega(r\zeta)|
\right|.
\eeq
Moreover,
\beqq
\left|
\frac{d}{dr}|\omega(r\zeta)|
\right|
\leq
\left|
\frac{d}{dr}\omega(r\zeta)
\right|
=
|\omega'(r\zeta)|\,|\zeta|.
\eeqq
By Schwarz--Pick's lemma for $\omega$,
\beqq
|\omega'(r\zeta)|
\leq
\frac{1-|\omega(r\zeta)|^2}
{1-r^2|\zeta|^2}.
\eeqq
Thus
\beqq
\left|
\frac{d}{dr}|\omega(r\zeta)|
\right|
\leq
\frac{1-|\omega(r\zeta)|^2}
{1-r^2|\zeta|^2}
|\zeta|.
\eeqq
Combining \eqref{3}
 and \eqref{3.4} yields
\begin{align*}
\left|
\frac{d}{dr}s(r\zeta)
\right|
&\leq
\frac{
1-|\omega(r\zeta)|^2
}
{
\sqrt{1-|\omega(r\zeta)|^2}
\left(1+\sqrt{1-|\omega(r\zeta)|^2}\right)
}
\cdot
\frac{|\zeta|}
{1-r^2|\zeta|^2}\\ 
&\leq
\frac{\sqrt{1-|\omega(r\zeta)|^2}}
{1+\sqrt{1-|\omega(r\zeta)|^2}}
\cdot
\frac{|\zeta|}
{1-r^2|\zeta|^2} \\
&\leq
\frac{1}{2}\frac{|\zeta|}
{1-r^2|\zeta|^2}.
\end{align*}
Integrating from \(0\) to \(1\), we get
\beqq
|s(\zeta)-s(0)|
\leq
\int_0^1\left|\frac{d}{dr}s(r\zeta)\right|\,dr
\leq
\int_0^1\frac{1}{2}\frac{|\zeta|}{1-r^2|\zeta|^2}\,dr=\frac{1}{2}\operatorname{arctanh}|\zeta|.
\eeqq
On the other hand, since \(0\leq s(\xi)<1\) for \(\xi\in\mathbb D\), we also have
\beqq
|s(\zeta)-s(0)|\leq1.
\eeqq
Therefore,
\be\label{ggp-0}
|s(\zeta)-s(0)|
\leq
\min\left\{\frac{1}{2}\operatorname{arctanh}|\zeta|,1\right\}.
\ee

We next show that
\be\label{ggp-1}
\min\left\{\frac{1}{2}\operatorname{arctanh}r,1\right\}
\leq
(\operatorname{coth}2)r,
\qquad
0\leq r<1.
\ee
The case \(r=0\) is immediate. Suppose first that \(0<r\leq \tanh2\). Then
\beqq
\frac{1}{2}\operatorname{arctanh}r\leq1.
\eeqq
Moreover, the function
\beqq
F(r)=\frac{\operatorname{arctanh}r}{r}
\eeqq
is increasing on \((0,1)\). To see this, note that
\beqq
F'(r)
=
\frac{\frac{r}{1-r^2}-\operatorname{arctanh}r}{r^2}.
\eeqq
Since
\beqq
\operatorname{arctanh}r
=
\int_0^r\frac{dt}{1-t^2}
\leq
\int_0^r\frac{dt}{1-r^2}
=
\frac{r}{1-r^2},
\eeqq
we have \(F'(r)\geq0\). Consequently, for \(0<r\leq\tanh2\),
\beqq
\frac{\operatorname{arctanh}r}{r}
\leq
\frac{\operatorname{arctanh}(\tanh2)}{\tanh2}
=
\frac2{\tanh2},
\eeqq
which yields that
\beqq
\frac{1}{2}\operatorname{arctanh}r
\leq
\frac{r}{\tanh2}
=
(\operatorname{coth}2)r.
\eeqq
If instead \(\tanh2\leq r<1\), then
\beqq
\min\left\{\frac{1}{2}\operatorname{arctanh}r,1\right\}=1
\leq
\frac{r}{\tanh2}
=
(\operatorname{coth}2)r.
\eeqq
Thus (\ref{ggp-1}) holds.

Combining (\ref{ggp-0}) and (\ref{ggp-1}) with the choice $r=|\zeta|$,  we obtain
\beqq
|s(\zeta)-s(0)|
\leq
(\operatorname{coth}2)|\zeta|,
\eeqq
which, together  with \eqref{3.t3} and \eqref{3.3}, gives that
\be\label{yh0-1}
|E(\zeta)-E(0)|
\leq
\frac{3\sqrt{3}}{2}|\zeta|\,\|G\|_{\mathscr{B}_{s}}
+
b(0)(\operatorname{coth}2)|\zeta|.
\ee
Note that $b(0)\leq\|G\|_{\mathscr{B}_{s}}$. Then, by (\ref{yh0-1}), we have
\beqq
|E(\zeta)-E(0)|
\leq
\left(
\frac{3\sqrt{3}}{2}
+
\operatorname{coth}2
\right)
|\zeta|\,\|G\|_{\mathscr{B}_{s}}.
\eeqq
It follows from the M\"obius invariance of the
analytic Bloch semi-norm again and Lemma \ref{lem-2.2} that
\beqq
\|G\|_{\mathscr{B}_{s}}
=
\|g\circ\varphi_w\|_{\mathscr{B}_{s}}
=
\|g\|_{\mathscr{B}_{s}}
\leq
\|f\|_{\mathscr{B}_{h^*,s}}  \frac{\sqrt{K - 1}}{2}
+ \frac{\sqrt{K_0}}{2}.
\eeqq
Consequently,
\beq\label{3.5}
|E(\zeta)-E(0)|
\leq
\left(
\frac{3\sqrt{3}}{2}
+
\operatorname{coth}2
\right)
\left(
\|f\|_{\mathscr{B}_{h^*,s}}  \frac{\sqrt{K - 1}}{2}
+ \frac{\sqrt{K_0}}{2}
\right)|\zeta|.
\eeq

From \eqref{3.1}, \eqref{3.2} and \eqref{3.5},  we conclude that
\begin{align*}
|U(\zeta)-U(0)|
&\leq
\left[
C_{0}\frac{\sqrt{ K+3}}{2}
+
\left(
\frac{3\sqrt{3}}{2}
+
\operatorname{coth}2
\right)
  \frac{\sqrt{K - 1}}{2}
\right]
\|f\|_{\mathscr{B}_{h^*,s}}|\zeta| \\
&\quad+
\left[
C_{0}
+
\left(
\frac{3\sqrt{3}}{2}
+
\operatorname{coth}2\right)\right]
\frac{\sqrt{K_{0}}}{2}|\zeta|.
\end{align*}
The proof of this theorem is complete.
\qed

\blem\label{lem-2.5}
Let \(f=h+\overline g\) be a harmonic \(K\)-quasiregular mapping in
\(\mathbb D\), where \(K\geq1\). If
\(f\in \mathscr{B}_h^*\), then
\beqq
\|g\|_{\mathscr{B}_{s}}
\leq
\|f\|_{\mathscr{B}_{h^*,s}} \frac{K-1}{2\sqrt{K}}
\eeqq
and
\beqq
\|h\|_{\mathscr{B}_{s}}
\leq
\|f\|_{\mathscr{B}_{h^*,s}} \frac{K+1}{2\sqrt{K}}.
\eeqq
\elem

\begin{proof}
Let  \(M \), $a(z)$ and $b(z)$ be as in the proof of Lemma  \ref{lem-2.2}.
By the similar reasoning to that in the proof of Lemma  \ref{lem-2.2}, we get
\begin{align*}
(a(z) + b(z))^2
&\leq K\bigl(a(z)^2 - b(z)^2\bigr).
\end{align*}
Equivalently,
\beqq
(K + 1) b(z)
\leq (K - 1) a(z).
\eeqq
Combining this with \eqref{eq-2.19}, we have
\beqq
(K + 1)^2 b(z)^2
\leq (K - 1)^2\bigl(M^2 + b(z)^2\bigr) .
\eeqq
Thus,
\beq\label{cq-chj2}
4K b(z)^2 \leq (K - 1)^2 M^2 ,
\eeq
and consequently,
\beqq
b(z)
\leq M \frac{K-1}{2\sqrt{K}}.
\eeqq
Taking the supremum over \(z \in \mathbb D\), we obtain
\beqq
\|g\|_{\mathscr{B}_{s}}=\sup_{z \in \mathbb D} b(z)
= \sup_{z \in \mathbb D} \mathcal{B}_{g}(z)
\leq M \frac{K-1}{2\sqrt{K}}.
\eeqq
On the other hand, it follows from \eqref{eq-2.19} and \eqref{cq-chj2} that
$$
4a(z)^2 \leq 4 M^2 + 4b(z)^2  \leq 4M^2+\frac{(K - 1)^2}{K} M^2  = \frac{(K+1)^2}{K}M^2,
$$
which implies that
\beqq
a(z)
\leq M \frac{K+1}{2\sqrt{K}}.
\eeqq
Taking the supremum over \(z \in \mathbb D\), we  obtain
\beqq
\|h\|_{\mathscr{B}_{s}}
= \sup_{z \in \mathbb D} a(z)
= \sup_{z \in \mathbb D} \mathcal{B}_{h}(z)
\leq  M \frac{K+1}{2\sqrt{K}}.
\eeqq
The proof of this lemma is complete.
\end{proof}

\subsection*{Proof of Theorem \ref{cor-1}}
We will use an argument similar to that in the proof of Theorem~\ref{thm-1}.
Let $U(\zeta)$, $a(\zeta)$, $b(\zeta)$, $s(\zeta)$, $\omega(\zeta)$ be as in the proof of Theorem~\ref{thm-1}.
It suffices to estimate
\beqq
|U(\zeta)-U(0)|.
\eeqq
By the proof of Theorem ~\ref{thm-1},
\beq\label{Eq-2.30}
|U(\zeta)-U(0)|
\leq
|a(\zeta)-a(0)|+|b(\zeta)-b(0)|s(\zeta)
+
b(0)|s(\zeta)-s(0)|.
\eeq
Since $f$ is  locally univalent harmonic $K$-quasiregular,
we have
\begin{align}\label{Eq-2.31}
|\omega(\zeta)|\leq \frac{K-1}{K+1}=k.
\end{align}
Therefore, 
we obtain 
\begin{align}\label{Eq-2.32}
s(\zeta)=\frac{|\omega(\zeta)|}{1+\sqrt{1-|\omega(\zeta)|^2}}\leq \frac{k}{1+\sqrt{1-k^2}}=\frac{kL(K)}{2}.
\end{align}

Next, 
by Schwarz--Pick's lemma for $\omega/k$,
\beqq
|\omega'(r\zeta)|
\leq
\frac{k(1-|\omega(r\zeta)|^2)}
{1-r^2|\zeta|^2}.
\eeqq
Then as in the proof of Theorem~\ref{thm-1},
we obtain
\beqq
|s(\zeta)-s(0)|
\leq
\frac{k}{2}\operatorname{arctanh}|\zeta|.
\eeqq
On the other hand, since \(0\leq s(\xi)\leq \frac{kL(K)}{2}\) for \(\xi\in\mathbb D\), we also have
\beqq
|s(\zeta)-s(0)|\leq\frac{kL(K)}{2}.
\eeqq
Consequently,
\beqq
|s(\zeta)-s(0)|
\leq
k\min\left\{\frac{1}{2}\operatorname{arctanh}|\zeta|,\frac{L(K)}{2}\right\}.
\eeqq

As in the proof of  Theorem~\ref{thm-1}, we obtain the elementary estimate
\beqq
\min\left\{\frac{1}{2}\operatorname{arctanh}r,\frac{L(K)}{2}\right\}
\leq
\frac{L(K)}{2}(\operatorname{coth}L(K))r,
\qquad
0\leq r<1.
\eeqq

Taking \(r=|\zeta|\), we get
\begin{align}\label{Eq-2.33}
|s(\zeta)-s(0)|
&\leq
\frac{kL(K)}{2}(\operatorname{coth}L(K))|\zeta|.
\end{align}
Note that \(b(0)\leq
\|g\|_{\mathscr{B}_{s}}\),
where ${g}$ is the complex conjugate of the coanalytic part of $f$. 
Substituting
 \eqref{chj0}, \eqref{3.3}, \eqref{Eq-2.32} and \eqref{Eq-2.33} into \eqref{Eq-2.30},
we obtain that
\begin{align*}
|U(\zeta)-U(0)|
&\leq
C_{0}|\zeta|
\|h\|_{\mathscr{B}_{s}}
+
\frac{kL(K)}{2}\left(
\frac{3\sqrt{3}}{2}
+
\operatorname{coth}L(K)
\right)
|\zeta|
\|g\|_{\mathscr{B}_{s}}.
\end{align*}
Thus, by Lemma \ref{lem-2.5}, we obtain
\begin{align*}
|U(\zeta)-U(0)|
&\leq
\left[
C_{0}\frac{(K+1)}{2\sqrt{K}}
+
\frac{kL(K)}{2}\left(
\frac{3\sqrt{3}}{2}
+
\operatorname{coth}L(K)
\right)
  \frac{(K - 1)}{2\sqrt{K}}
\right]
\|f\|_{\mathscr{B}_{h^*,s}}|\zeta|.
\end{align*}
The proof of this theorem is complete.
\qed

\bigskip
{\bf Data Availability} Our manuscript has no associated data.

{\bf Conflict of interest} The authors declare that they have no conflict of interest.

\bigskip

{\rm $\mathbf{Acknowledgments:}$}	The first author was partially supported by the National Natural Science Foundation of China (Grant No. 12571080) and the Guangxi Natural Science Foundation (Grant No. 2026GXNSFFA00640002). The second author was partially supported by JSPS KAKENHI Grant Number JP22K03363. The third author was partially supported by the National Natural Science Foundation of China (Grant No. 12371071) and the Key Project of the NSF of Hunan Province (Grant No. 2026JJ30002). The fourth author was partially supported by the Scientific Research Fund of the Hunan Provincial Education Department (Grant No. 25A0086).

\end{document}